\newcommand{\nof}{r}
\newcommand{\SR}{\operatorname{SR}}
\newcommand{\bone}{\bsone}
\title{Tropical coamoeba and torus-equivariant
homological mirror symmetry for the projective space}
\author{Masahiro Futaki and Kazushi Ueda}
\date{}
\begin{document}

\maketitle

\begin{abstract}
We introduce the notion of a tropical coamoeba
which gives a combinatorial description
of the Fukaya category of the mirror of a toric Fano stack.
We show that the polyhedral decomposition of a real $n$-torus
into $n+1$ permutohedra gives a tropical coamoeba
for the mirror of the projective space $\bP^n$,
and prove a torus-equivariant version of
homological mirror symmetry for the projective space.
As a corollary, we obtain homological mirror symmetry
for toric orbifolds of the projective space.
\end{abstract}

\section{Introduction}
 \label{sc:introduction}

Let $n$ be a natural number and
$\Delta$ be a convex lattice polytope in $\bR^n$,
i.e., 
the convex hull of a finite subset of $\bZ^n$.
We assume that the origin is in the interior of $\Delta$.
One side of homological mirror symmetry for toric Fano stacks,
conjectured by Kontsevich \cite{Kontsevich_ENS98},
states that there is an equivalence
\begin{equation} \label{eq:hms}
 D^b \coh X \cong D^b \Fuk W
\end{equation}
of two triangulated categories of geometric origins
associated with $\Delta$.
For the other side of homological mirror symmetry for toric manifolds,
we refer the readers to the survey paper
\cite{MR3076063} and references therein.

The category on the left hand side is
the derived category of coherent sheaves
on the toric Fano stack $X$,
defined as follows:
Let $\{ v_i \}_{i=1}^\nof$
be the set of vertices of $\Delta$ and 
take a simplicial stacky fan $\bsSigma$
such that the set of generators of one-dimensional cones
is given by $\{ v_i \}_{i=1}^\nof$.
The associated toric stack is the quotient stack
\begin{equation*}
 X = [(\bC^\nof \setminus \SR(\Sigma)) / K],
\end{equation*}
where the Stanley-Reisner locus $\SR(\Sigma)$
consists of points $(z_1, \dots, z_r)$ such that
there is no cone in $\Sigma$ which contains
all $v_i$ for which $z_i = 0$, and
$$
 K = \Ker (\phi \otimes \bCx)
$$
is the kernel of the tensor product with $\bCx$
of the map
$
 \phi :  \bZ^\nof \to  \bZ^n
$
sending the $i$-th coordinate vector to $v_i$
for $i = 1, \dots, \nof$.
Although 
$X$ depends not only on $\Delta$
but also on $\Sigma$,
the derived category $D^b \coh X$ is independent of this choice
\cite[Theorem 4.2]{Kawamata_LCBMDC}
and depends only on $\Delta$.

On the right hand side,
one takes a sufficiently general Laurent polynomial
$$
 W = \sum_{\omega \in \Delta \cap \bZ^n} a_\omega x^\omega
$$
whose Newton polytope coincides with $\Delta$
as in \cite{Givental_HGMS}.
This defines an exact Lefschetz fibration
$$
 W : (\bCx)^n \to \bC
$$
with respect to the standard cylindrical K\"{a}hler structure
on $(\bCx)^n$,
and $\Fuk W$ is the directed Fukaya category
defined by Seidel \cite{Seidel_VC2, Seidel_PL} 
whose set of objects is a distinguished basis of vanishing cycles and
whose spaces of morphisms are Lagrangian intersection Floer complexes.

The equivalence \eqref{eq:hms} is proved
for $\bP^2$ and $\bP^1 \times \bP^1$ by Seidel \cite{Seidel_VC2},
weighted projective planes and Hirzebruch surfaces
by Auroux, Katzarkov and Orlov
\cite{Auroux-Katzarkov-Orlov_WPP},
toric del Pezzo surfaces by Ueda \cite{Ueda_HMSTdPS}, and
toric orbifolds of toric del Pezzo surfaces
by Ueda and Yamazaki
\cite{Ueda-Yamazaki_toricdP}.
See also Auroux, Katzarkov and Orlov
\cite{Auroux-Katzarkov-Orlov_dP}
for homological mirror symmetry for
not necessarily toric del Pezzo surfaces,
Abouzaid \cite{Abouzaid_HCRMSTV, Abouzaid_MHTGHMSTV}
for an application of tropical geometry
to homological mirror symmetry,
Kerr \cite{Kerr_WBMSTS}
for the behavior of homological mirror symmetry
under weighted blowup of toric surfaces.
Slightly different versions
of homological mirror symmetry for toric stacks
are proved by
Fang, Liu, Treumann and Zaslow
\cite{Fang_HMSTP, Fang-Liu-Treumann-Zaslow_CCC,
MR3168399}
and Futaki and Ueda
\cite{Futaki-Ueda_A-infinity}.

In this paper,
we pass to the universal cover
$$
 \exp : \bC^n \to (\bCx)^n
$$
of the torus and
replace the Lefschetz fibration $W$
with its pull-back
$$
 \Wtilde = W \circ \exp : \bC^n \to \bC.
$$
The fact that $\Wtilde$ has countably many critical points
does not cause any problem, and
one can formulate a torus-equivariant version of
homological mirror symmetry for toric Fano stacks:

\begin{conjecture} \label{conj:equiv_hms}
For a convex lattice polytope $\Delta$
containing the origin in its interior,
there is an equivalence
$$
 D^b \coh^\bT X \cong D^b \Fuk \Wtilde
$$
of triangulated categories.
\end{conjecture}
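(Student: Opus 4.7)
The plan is to use the tropical coamoeba introduced in this paper to give a combinatorial $A_\infty$-presentation of each side of the conjectured equivalence, and then to identify them as $\bZ^n$-graded $A_\infty$-algebras, the extra $\bZ^n$-grading recording on the A-side the deck transformations of $\exp : \bC^n \to (\bCx)^n$ and on the B-side the characters of $\bT$. First I would show that a distinguished basis of vanishing cycles for $W$ lifts to a $\bZ^n$-equivariant family of Lagrangians for $\Wtilde$ in $\bC^n$, with each top-dimensional cell of the tropical coamoeba corresponding to a single vanishing cycle and the deck lattice $\bZ^n$ acting by translation on these lifts. The cell adjacencies in the polyhedral decomposition of $\bR^n$ encode the intersection complex of this collection, and the directed Fukaya-style $A_\infty$-algebra of all lifts is then a $\bZ^n$-graded path algebra with relations.

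On the B-side, I would generate $D^b \coh^\bT X$ by a Beilinson-type full strong exceptional collection of $\bT$-equivariant coherent sheaves on $X$ together with all their character twists by $\bT$. For the projective space $\bP^n$ treated in this paper, the collection $\{\scO(k)\}_{k=0}^n$ equipped with its tautological equivariant structures plays this role, and the $\bT$-equivariant endomorphism algebra is the path algebra of the Beilinson quiver whose arrows are weighted by characters in $\bZ^n$. The torus equivariance thus naturally produces the same $\bZ^n$-grading that the universal cover produces on the A-side, and the matching of generators is dictated by the correspondence between cells of the tropical coamoeba and members of the exceptional collection.

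The identification of the two $\bZ^n$-graded $A_\infty$-algebras is then carried out cell-by-cell: a vanishing cycle indexed by a given cell corresponds to a specific member of the exceptional collection, and a deck translate by $\alpha \in \bZ^n$ corresponds to the character twist by $\alpha$. Once the morphism spaces agree as $\bZ^n$-graded vector spaces, the equivalence of derived categories follows by a standard generation argument together with the independence of $D^b \coh X$ from the stacky fan already cited in the introduction (extended to the equivariant setting). Passing back to the non-equivariant statement by forgetting the $\bZ^n$-grading would in addition recover the original conjecture \eqref{eq:hms}, providing a consistency check.

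The hard part will be the computation and control of the higher $A_\infty$-operations on the A-side, namely verifying that the pseudoholomorphic polygon counts between the lifted vanishing cycles either vanish or precisely reproduce the relations of the equivariant Beilinson quiver. In the general case the tropical coamoeba is intended exactly to turn this analytic question into a combinatorial one, so the main technical work is to justify such a combinatorial polygon count rigorously. For the $\bP^n$ case treated in this paper the $S_{n+1}$-symmetry of the permutohedral decomposition and of $\Wtilde$ severely constrains the possible contributions and should make the verification combinatorially tractable; this symmetry-driven reduction is where I expect the explicit proof of the conjecture for $\bP^n$ to crystallise.
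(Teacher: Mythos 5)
There is a genuine gap, and it sits exactly where you have placed the ``hard part.'' Your plan is to use the tropical coamoeba as the computational engine: cells give the Lagrangians, adjacencies give the morphisms, and a combinatorial polygon count (controlled by the $S_{n+1}$-symmetry of the permutohedral decomposition) is supposed to produce the $A_\infty$-structure. But in the paper the tropical coamoeba cannot play that role, because its very definition (Definition \ref{def:tropical_coamoeba}) takes the Fukaya category as input: the degree function is the Maslov index of actual intersection points of the vanishing cycles, and the sign function is \emph{defined} by the nontrivial $A_\infty$-operations $\frakm_k(f_1,\dots,f_k)=\sgn(e)f_0$ in $\Fuk W$. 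Using the coamoeba to compute $\Fuk\Wtilde$ is therefore circular unless you independently establish that the polygon counts are what the combinatorics predicts, and no mechanism for doing so is given; invoking the $S_{n+1}$-symmetry does not by itself force the counts, and it is not how the paper proceeds. The actual proof computes $\Fuk\Wtilde$ for the mirror of $\bP^n$ by induction on $n$ via a Lefschetz bifibration: each lifted vanishing cycle $C_i$ of $\Wtilde$ is a matching cycle, hence by symplectic Picard--Lefschetz theory is isomorphic in the derived category to a cone $\Cone(\Delta_i\xto{\id_{i,i+n}}\Delta_{i+n})$ over thimbles of $\varpi_0$, whose fiber is the mirror of $\bP^{n-1}$; morphism spaces are then computed as total complexes of explicit double complexes, $\frakm_2$ is identified with the wedge product, and the higher $\frakm_k$ are killed by degree reasons (morphisms from $C_i$ to $C_j$ sit in the single degree $j-i$). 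The coamoeba of Section \ref{sc:coamoeba} is a combinatorial \emph{summary} of this result, not its source, so your proposal is missing precisely the inductive Picard--Lefschetz computation that constitutes the proof.

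Two smaller points. First, matching the two sides ``as $\bZ^n$-graded vector spaces'' is not enough to conclude a derived equivalence: you must also match $\frakm_2$ (wedge product) and show all higher products vanish on the A-side and that the $\bT$-equivariant endomorphism algebra on the B-side is formal; the paper does this, and for that reason works with the collection $(\Omega_{\bP^n}^{n}(n)[n],\dots,\scO_{\bP^n})$ with its natural $\bT$-linearization, whose endomorphism algebra is literally $\wedge^{\bullet}V$, rather than with $\{\scO(k)\}_{k=0}^n$, whose presentation is by the symmetric-algebra Beilinson quiver and would require an extra Koszul-type translation. Second, the appeal to independence of $D^b\coh X$ from the stacky fan is not needed for $\bP^n$ and does not address the real issue; the equivariant statement follows in the paper simply by sending the $\bsj$-th lift of $C_i$ (equivalently of $P_i$) to $E_i\otimes\rho_{\bsj}$ once the $\bZ^n$-graded $A_\infty$-computation of Theorem \ref{th:Pn_equiv_fuk} is in hand.
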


Here $\bT$ is the $n$-dimensional torus acting on $X$ and
$D^b \coh^\bT X$ is the derived category of
$\bT$-equivariant coherent sheaves on $X$.
Our first main result is the proof of
Conjecture \ref{conj:equiv_hms}
for the projective space:

\begin{theorem} \label{th:equiv_hms}
Conjecture \ref{conj:equiv_hms} holds
when $X$ is the projective space.
\end{theorem}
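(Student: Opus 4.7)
The plan is to identify both sides of Conjecture \ref{conj:equiv_hms} with the derived category of modules over a single $\bZ^{n+1}$-graded $A_\infty$-algebra, realized combinatorially from the permutohedral tiling. The key observation is that the passage to the universal cover on the A-side matches the passage to $\bT$-equivariant sheaves on the B-side: the $\bZ^n$-action by deck transformations on $\Wtilde$ corresponds to the $\bT$-action on $\bP^n$, and the $n+1$ non-equivariant vanishing cycles of $W$ become an infinite $\bZ^n$-periodic family upon lifting.

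On the B-side, I would use an equivariant refinement of Beilinson's exceptional collection. The enlarged Picard group $\Pic^\bT(\bP^n) \cong \bZ^{n+1}$ yields a family $\scO(i) \otimes \chi_{\vec{\alpha}}$ of equivariant line bundles indexed by $(i, \vec{\alpha}) \in \{0,\ldots,n\} \times X^*(\bT)$. The total equivariant endomorphism algebra is an equivariant Beilinson quiver with Koszul-type relations: vertices indexed by $\{0,\ldots,n\} \times \bZ^n$, with $n+1$ arrows from each vertex corresponding to the homogeneous coordinates $z_0, \ldots, z_n$ labelled by their $\bT$-weights. Standard tilting arguments then identify $D^b \coh^\bT \bP^n$ with the derived category of graded modules over this algebra.

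On the A-side, the critical points of the mirror $W = x_1 + \cdots + x_n + (x_1 \cdots x_n)^{-1}$ lift under $\exp$ to a periodic family indexed by $\{0,\ldots,n\} \times \bZ^n$. By the tropical coamoeba construction developed in the paper, the permutohedral tiling of the $n$-torus lifts to a $\bZ^n$-periodic tiling of $\bR^n$ by permutohedra $\Pi_{i,\vec{k}}$, one per critical point, and the Floer complex between two corresponding vanishing cycles is freely generated by the shared faces of the two polytopes. A direct check identifies the resulting quiver, with its combinatorially prescribed composition, with the equivariant Beilinson quiver of the preceding paragraph.

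The main obstacle I expect is to match the higher $A_\infty$-operations $m_k$ for $k \geq 3$, i.e., to establish formality of $\Fuk \Wtilde$ and to verify that higher holomorphic polygon counts reproduce exactly the Koszul relations on the B-side. The $\bZ^{n+1}$-grading by $\bT$-weights puts strong constraints on which $m_k$ can be nonzero, and a dimension count of $(k+1)$-gons with corners at the intersection points encoded by permutohedral faces should show that the only nontrivial compositions arise from concatenating paths of monomials $z_{i_1} \cdots z_{i_k}$ modulo Koszul syzygies. Implementing this cleanly likely requires working in a tropical/degenerate limit in which holomorphic polygons localize to the $0$-skeleton of the permutohedral tiling, which is precisely where the coamoeba framework shows its force; once the algebra match is in place, the corollary for toric orbifolds of $\bP^n$ follows by taking $\bZ^n$-equivariant subcategories on both sides.
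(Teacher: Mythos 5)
There is a genuine gap, in two places. First, your B-side model does not match your A-side combinatorics. You propose to realize $D^b \coh^{\bT} \bP^n$ via the equivariant line bundles $\scO(i) \otimes \chi_{\vec{\alpha}}$, i.e.\ the Beilinson quiver with symmetric (Koszul) relations, whose morphism spaces between vertices of index difference $k$ have dimension $\dim \Sym^k \bC^{n+1} = \binom{n+k}{k}$. But the quiver you extract from the permutohedral tiling has morphism spaces spanned by common facets, and these have dimension $\binom{n+1}{k}$, i.e.\ $\wedge^{k}V$: already for $k=2$ the ``direct check'' identifying the two quivers fails ($\binom{n+1}{2} \neq \binom{n+2}{2}$), and the $\frakm_2$-products on the A-side are anticommutative, not commutative. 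To repair this you must either insert a Koszul duality argument or replace the collection by $(\Omega^n_{\bP^n}(n)[n], \dots, \scO_{\bP^n})$, whose $\bT$-equivariant endomorphism algebra is the directed exterior algebra $(\wedge^{i'-i}V \otimes \rho_{\bsj'-\bsj})^{\bT}$ and is formal; this is exactly what the paper does, invoking Beilinson's theorem for fullness.

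Second, the central A-side claim --- that the Floer complex between lifted vanishing cycles is freely generated by shared permutohedral facets and that all products are computed by the codimension-two faces, via a ``tropical/degenerate limit in which holomorphic polygons localize'' --- is asserted, not proved, and no such localization theorem is available here. In the paper this is the hard content: $\Fuk \Wtilde$ is computed by induction on $n$ using the Lefschetz bifibration $\varpi$, identifying each vanishing cycle with a matching cycle and hence, by Seidel's theorem, with a cone $\Cone(\Delta_i \to \Delta_{i+n})$ over thimbles of $\varpitilde_0$; the morphism spaces are then total complexes whose cohomology is computed explicitly, the recursion bottoms out in curves where disk counts are combinatorial, and the vanishing of $\frakm_k$ for $k \neq 2$ follows for degree reasons only after the graded ranks are known. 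The permutohedral tiling (the tropical coamoeba) is a bookkeeping device that records the output of this computation; it cannot be used as the input that produces the Floer complexes without an independent argument. As written, your proposal assumes precisely the statement of Theorems \ref{th:Pn_fuk} and \ref{th:Pn_equiv_fuk} rather than proving it.
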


The case $n = 3$ in Theorem \ref{th:equiv_hms} is a corollary
of a result of Seidel \cite[Proposition 11.7]{Seidel_K3}
which describes the undirected Fukaya category
of the fiber of $W$.
Theorem \ref{th:equiv_hms} is an important step
in the proof of homological mirror symmetry
for the quintic 3-fold in \cite{Nohara-Ueda_HMSQ}.
Torus-equivariant homological mirror symmetry for $X$ implies
the ordinary homological mirror symmetry,
not only for $X$
but also for the quotient stack $[X / A]$
for any finite subgroup $A$ of the torus $\bT$
acting on $X$.

\begin{corollary} \label{cor:hms}
For a convex lattice polytope $\Delta$
which can be obtained from the polytope for $\bP^n$
by an integral linear transformation,
one has an equivalence
$$
 D^b \coh X \cong D^b \Fuk W
$$
of triangulated categories.
\end{corollary}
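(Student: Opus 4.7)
The strategy is to derive the corollary from Theorem~\ref{th:equiv_hms} by a descent argument that expresses both sides as quotients of the corresponding categories for $\bP^n$. Let $\psi \in M_n(\bZ)$ denote the integral linear transformation mapping the polytope of $\bP^n$ to $\Delta$; since $\Delta$ is $n$-dimensional, $\psi$ is invertible over $\bQ$, and $A := \Hom(\coker \psi, \bCx)$ is a finite abelian subgroup of the torus $\bT$.

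On the coherent side, the defining map $\phi : \bZ^{n+1} \to \bZ^n$ for $X$ factors as $\phi = \psi \circ \phi_{\bP^n}$, so the corresponding groups $K$ sit in a short exact sequence $1 \to K_{\bP^n} \to K_X \to A \to 1$, exhibiting $X$ as the global quotient stack $[\bP^n / A]$. Hence $D^b \coh X \cong D^b \coh^A \bP^n$, and restriction from $\bT$- to $A$-equivariant sheaves realises the latter as the orbit category of $D^b \coh^\bT \bP^n$ under the action of the character sublattice $L := X^*(\bT/A) \cong \psi(\bZ^n) \subset \bZ^n$ by line bundle twists.

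On the symplectic side, I would choose the generic Laurent polynomial for $\Delta$ to be $W = W_{\bP^n} \circ \psi^*$, where $\psi^* : \bT \to \bT$ is the $A$-Galois isogeny dual to $\psi$. Pulling back to the universal cover gives $\Wtilde = \Wtilde_{\bP^n} \circ \psi^*_\bR$, which differs from $\Wtilde_{\bP^n}$ only by the linear automorphism $\psi^*_\bR$ of $\bC^n$; in particular $D^b \Fuk \Wtilde \cong D^b \Fuk \Wtilde_{\bP^n}$. Moreover $D^b \Fuk W$ is the orbit category of $D^b \Fuk \Wtilde$ under the deck group $\pi_1(\bT_X) \subset \pi_1(\bT) = \bZ^n$, since the vanishing cycles of $\Wtilde$ on $\bC^n$ are precisely the deck translates of the lifts of those of $W$ on the intermediate cover $\bT_X$. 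Under the standard character/cocharacter duality between the two sides of toric mirror symmetry, $L$ and $\pi_1(\bT_X)$ correspond as sublattices of $\bZ^n$.

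Combining these, Theorem~\ref{th:equiv_hms} applied to $\bP^n$ yields an equivalence
$$
 \Phi : D^b \coh^\bT \bP^n \xrightarrow{\sim} D^b \Fuk \Wtilde_{\bP^n},
$$
and if $\Phi$ intertwines the two $\bZ^n$-actions described above, it descends to an equivalence of $L$-orbit categories, producing $D^b \coh X \cong D^b \Fuk W$. The main obstacle is exactly this equivariance of $\Phi$ with respect to line bundle twists on the coherent side and deck translations on the symplectic side; although natural from the $\bT$-equivariant construction underlying Theorem~\ref{th:equiv_hms}, it must be verified explicitly, for instance by tracking the mirrors of the exceptional collection $\{\scO_{\bP^n}(i)\}_{i=0}^n$ under $\Phi$, to ensure that the two orbit categories genuinely correspond.
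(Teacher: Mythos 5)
Your proposal follows essentially the same route as the paper: the paper likewise observes that such an $X$ is the quotient stack $[\bP^n/A]$ for a finite subgroup $A$ of the torus $\bT$, takes the mirror $W$ to be the pullback of the mirror of $\bP^n$ along the dual isogeny of tori, and deduces the corollary by descending the torus-equivariant equivalence of Theorem \ref{th:equiv_hms} from the universal cover (deck group $\bZ^n$) to the intermediate finite cover, "just as in the two-dimensional case" of \cite{Ueda-Yamazaki_toricdP}. The equivariance of $\Phi$ that you single out as the remaining check is automatic in the paper's construction, since the equivalence is given on objects by $C_{i,\bsj} \mapsto E_i \otimes \rho_{\bsj}$ with morphism spaces identified with $(\wedge^{i'-i} V \otimes \rho_{\bsj'-\bsj})^{\bT}$, which depend only on $\bsj'-\bsj$ and hence intertwine deck translations with twists by characters.
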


We introduce the notion of a {\em tropical coamoeba} of $W$,
which consists of a decomposition
$$
 T = \bigcup_{i=1}^m P_i
$$
of a real $n$-torus $T = \bR^n / \bZ^n$
into the union of an ordered set of polytopes,
together with a map
$$
 \deg : F_1 \to \bZ
$$
from the set $F_1$ of facets of $P_i$ to $\bZ$ called the {\em degree},
and a map
$$
 \sgn : F_2 \to \{ 1, - 1 \}
$$
from the set $F_2$ of codimension two faces
of $P_i$ called the {\em sign},
satisfying conditions
in Definition \ref{def:tropical_coamoeba}.
One can associate a directed $A_\infty$-category
with a tropical coamoeba,
and the conditions in Definition \ref{def:tropical_coamoeba}
ensure that
this $A_\infty$-category is equivalent to $\Fuk \Wtilde$.
This enables us to divide Conjecture \ref{conj:equiv_hms}
into two steps:

\begin{conjecture} \label{conj:tropical_coamoeba}
Let $\Delta$ be a convex lattice polytope in $\bR^n$
containing the origin in its interior.
Then the following hold:
\begin{itemize}
 \item
There is a Laurent polynomial $W : (\bCx)^n \to \bC$ such that
\begin{itemize}
 \item
the Newton polytope of $W$ coincides with $\Delta$, and
 \item
there exists a tropical coamoeba $G$ of $W$.
\end{itemize}
This implies that the $A_\infty$-category
$\scA_\Gtilde$ associated with $G$ is
quasi-equivalent to $\Fuk \Wtilde$;
$$
 \scA_\Gtilde \cong \Fuk \Wtilde.
$$
 \item
The derived category of the $A_\infty$-category
$\scA_\Gtilde$ is equivalent
to the derived category of $\bT$-equivariant coherent sheaves
on the toric Fano stack $X$ associated with $\Delta$;
$$
 D^b \scA_\Gtilde \cong D^b \coh^\bT X.
$$
\end{itemize}
\end{conjecture}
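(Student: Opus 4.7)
The plan is to establish both halves of Conjecture~\ref{conj:tropical_coamoeba} by working, respectively, on the symplectic and the algebro-geometric sides, treating the $A_\infty$-category $\scA_\Gtilde$ as the common bridge.

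For the first equivalence $\scA_\Gtilde \cong \Fuk \Wtilde$, I would begin by choosing a Laurent polynomial $W = \sum_{\omega \in \Delta \cap \bZ^n} c_\omega x^\omega$ whose coefficients are tuned so that $W$ sits in the large complex structure limit: concretely, take $c_\omega = t^{-h(\omega)}$ for a strictly convex piecewise-linear height function $h$ inducing a coherent triangulation of $\Delta$, and degenerate $t \to 0$. Under the argument map $\arg : (\bCx)^n \to T$ the coamoeba of the zero locus $W^{-1}(0)$ collapses in this limit, and its complement becomes a disjoint union of open polytopes $P_1, \dots, P_m$ indexed by the maximal simplices of the induced subdivision. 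This decomposition, enriched with the degrees and signs extracted from how neighbouring sheets of $W^{-1}(0)$ cross transversally, is the candidate tropical coamoeba $G$. To relate this to the Fukaya side, I would build the distinguished basis of vanishing thimbles of $\Wtilde$ from Lagrangian lifts of the thimbles whose boundaries project onto the facets of the cellular decomposition $T = \bigcup P_i$, in the spirit of Seidel \cite{Seidel_VC2} and of the tropical Lagrangian constructions of Abouzaid \cite{Abouzaid_HCRMSTV, Abouzaid_MHTGHMSTV}. Lagrangian intersections are then dictated by incidences among the $P_i$, their Floer gradings reproduce the degree function on facets, and the spin-structure conventions reproduce the sign function on codimension-two faces, in accordance with Definition~\ref{def:tropical_coamoeba}.

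The real work is to compute the $A_\infty$-structure: holomorphic polygons bounded by these thimbles should, in the tropical limit, degenerate to configurations encoded entirely by the $1$- and $2$-skeleta of $\bigcup P_i$, so that the higher compositions $\mu_k$ in $\Fuk \Wtilde$ agree with the combinatorially defined compositions in $\scA_\Gtilde$. This is the step I expect to be hardest: one must control the moduli of holomorphic discs as $t \to 0$ and ensure that no contributions are lost or spuriously created, an analytical problem of the same flavour as those that arise in SYZ-type approaches to homological mirror symmetry.

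For the second equivalence $D^b \scA_\Gtilde \cong D^b \coh^\bT X$, I would exhibit a $\bT$-equivariant full exceptional collection on $X$ whose endomorphism dg algebra is quasi-isomorphic to $\scA_\Gtilde$. The natural candidate is a collection of $\bT$-equivariant line bundles $\scO_X(\chi_i)$, with the underlying line bundle and the equivariant structure chosen in bijection with the polytopes $P_i$ of the tropical coamoeba; for $X = \bP^n$ this specialises to Beilinson's $\scO, \scO(1), \dots, \scO(n)$ equipped with a canonical torus-equivariant refinement. Using the toric presentation of $X$ one computes $\Ext$ spaces between these equivariant line bundles as $\bT$-characters via the toric Bott vanishing/Borel--Weil--Bott calculus, and the degrees and signs on the codimension-one and codimension-two faces of the $P_i$ should reproduce the resulting graded endomorphism algebra exactly. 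Once the morphism spaces and their composition maps agree, formality of the endomorphism dg algebra (which holds for strong exceptional collections, as is expected on smooth toric Fano stacks) upgrades this to an equivalence of derived categories. The chief obstacle on the algebraic side is that the existence of a $\bT$-equivariant exceptional collection whose $\Ext$-quiver matches $\scA_\Gtilde$ on the nose is known only in restricted cases, most notably for $\bP^n$, and extending it to arbitrary $\Delta$ would require a general construction of a canonical exceptional collection indexed by a polyhedral decomposition of the mirror torus.
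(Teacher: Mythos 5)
Note first that the statement you are addressing is stated in the paper as a \emph{conjecture} for arbitrary $\Delta$, and the paper itself proves it only for the projective space (Theorem \ref{th:coamoeba}); your proposal is a programme for the general case whose two decisive steps are left unproved, so it does not constitute a proof. On the A-side, constructing $G$ from a tropical degeneration $c_\omega = t^{-h(\omega)}$ and asserting that holomorphic polygons degenerate onto the $1$- and $2$-skeleta of $\bigcup_i P_i$ is not an argument: Definition \ref{def:tropical_coamoeba} requires exhibiting a deformation retraction of $W^{-1}(0)$ onto a CW complex mapped homeomorphically by $\Arg$ onto the union of facets, identifying $\partial P_i$ with the image of the vanishing cycle $C_i$, matching Maslov indices with the degree function, and showing that \emph{every} nontrivial $A_\infty$-operation of $\Fuk \Wtilde$ arises from a codimension-two face with the prescribed sign. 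The control of disc moduli as $t \to 0$ that you flag as "the hardest step" is exactly the content of the first half of the conjecture, and you offer no mechanism for it. The paper's actual proof for $\bP^n$ sidesteps this analysis altogether: it argues by induction on $n$ through the Lefschetz bifibration $\varpi$, contracts the fibers onto permutohedral complexes, tracks an explicit order-$(n{+}1)$ monodromy, and reduces all Floer-theoretic computations to curves in Riemann surfaces via matching cycles, arriving at the decomposition of $T$ into $n+1$ permutohedra.

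On the B-side there is a concrete error in addition to the admitted gap. Your candidate collection of equivariant line bundles $\scO, \scO(1), \dots, \scO(n)$ does \emph{not} have endomorphism algebra matching $\scA_\Gtilde$ even for $\bP^n$: the facets of the permutohedron biject with a basis of $\wedge^\bullet V$ modulo $\wedge^0 V \oplus \wedge^{n+1}V$, and the paper computes (Theorem \ref{th:Pn_fuk}) that consecutive morphism spaces are $\wedge^{j-i}V$ with composition the wedge product; this matches the collection $(\Omega_{\bP^n}^n(n)[n], \dots, \scO_{\bP^n})$ used in the paper, whereas $\hom(\scO(i), \scO(j)) \cong \Sym^{j-i}V^\vee$ has the wrong dimensions $\binom{n+j-i}{j-i} \ne \binom{n+1}{j-i}$. (Your appeal to formality is fine once a strong exceptional collection with the correct quiver is in hand, since a dg algebra with cohomology concentrated in degree zero is formal, but that is the easy part.) For general $\Delta$ the existence of a $\bT$-equivariant full strong exceptional collection whose Ext-quiver reproduces $\scA_\Gtilde$ is precisely the open content of the second half of the conjecture, as you yourself note; so both halves remain unestablished beyond the case the paper already treats.
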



Our second main result is
the proof of Conjecture \ref{conj:tropical_coamoeba}
for the projective space:

\begin{theorem} \label{th:coamoeba}
Conjecture \ref{conj:tropical_coamoeba} holds
when $X$ is the projective space.
The tropical coamoeba in this case
comes from a decomposition of a real $n$-torus
into the union of $n + 1$ permutohedra of order $n + 1$.
\end{theorem}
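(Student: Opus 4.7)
The plan is to verify Conjecture \ref{conj:tropical_coamoeba} for $\bP^n$ in two stages: first exhibit a concrete tropical coamoeba $G$ arising from a permutohedral tiling of the torus, and then match the resulting $A_\infty$-category $\scA_\Gtilde$ with each side of the equivariant mirror equivalence.

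First, I would take the Laurent polynomial
$$
 W = x_1 + \cdots + x_n + \frac{t}{x_1 \cdots x_n}
$$
for a sufficiently small parameter $t > 0$; its Newton polytope is the reflexive simplex $\Delta$ of $\bP^n$, and its tropicalization is the standard tropical hyperplane dual to the subdivision of $\Delta$ into the $n+1$ lattice simplices around the origin. In the limit $t \to 0$, the complement of the coamoeba of $W$ in $T = \bR^n / \bZ^n$ consists of $n+1$ cells, each combinatorially a permutohedron of order $n+1$ (the Voronoi tiling of the root lattice $A_n$ pushed to $T$). Using explicit affine coordinates I would verify that this decomposition admits a consistent degree map on facets (labelling each facet by the index $i$ of the vertex $v_i \in \Delta$ to which it is dual) and a sign map on codimension-two faces (determined by an orientation convention), and that these data satisfy all conditions of Definition \ref{def:tropical_coamoeba}.

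Second, I would prove the quasi-equivalence $\scA_\Gtilde \cong \Fuk\Wtilde$. The lift $\Wtilde = W \circ \exp$ admits a distinguished basis of vanishing thimbles in bijection with the facets of the tiling of $\bR^n$ by the lifted permutohedra; their Lagrangian intersection Floer complexes are supported on codimension-two faces, and the higher $A_\infty$-products count the polygonal incidences prescribed by the combinatorial recipe defining $\scA_\Gtilde$. Here the strategy is to reduce to a local model near each vertex of the tiling, where $\Wtilde$ looks like a standard pair of pants for which the Floer calculation is essentially the one carried out in \cite{Seidel_VC2} and \cite{Futaki-Ueda_A-infinity}, and then to globalize using the consistency conditions in Definition \ref{def:tropical_coamoeba}.

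For the equivariant coherent side, I would identify the derived category $D^b\scA_\Gtilde$ with the derived category of representations of the torus-equivariant Beilinson-type quiver for $\bP^n$. The vertices of this quiver, indexed by the permutohedra in $\bR^n$, correspond to the $\bT$-equivariant line bundles $\scO(k) \otimes \chi$, and its arrows, given by multiplication by the homogeneous coordinates $x_0, \dots, x_n$ with their natural $\bT$-weights, correspond to facet adjacencies in the tiling; the Koszul relations between these arrows match the codimension-two incidence data. Granting this identification, the equivalence $D^b \scA_\Gtilde \cong D^b \coh^\bT \bP^n$ follows from the torus-equivariant version of Beilinson's theorem on $\bP^n$. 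The \emph{main obstacle} will be the Floer-theoretic matching in the second stage: verifying that the higher $A_\infty$-operations of $\Fuk \Wtilde$ on a countable family of non-compact Lagrangians in $\bC^n$ agree on the nose, with the correct signs, with the combinatorially defined operations of $\scA_\Gtilde$ requires a careful choice of perturbation data and of orientations at every codimension-two face of every permutohedron.
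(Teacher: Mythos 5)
You have a genuine gap at the central step, the quasi-equivalence $\scA_\Gtilde \cong \Fuk \Wtilde$. The paper does not establish this by vertex-local models: it computes $\Fuk W$ and $\Fuk \Wtilde$ by induction on $n$ via the Lefschetz bifibration $(\bCx)^n \to \bC \times \bCx \to \bC$ of \eqref{eq:lefschetz_bifibration3}--\eqref{eq:lefschetz_bifibration4}. The vanishing cycles $C_i$ of $W$ are matching cycles for $\varpi_0$, whose fiber is (up to the rescaling $x_n^{1/n}$) the mirror of $\bP^{n-1}$, and Seidel's Picard--Lefschetz theory identifies $C_i$ with $\Cone\lb \Delta_i \xto{\id_{i,i+n}} \Delta_{i+n}\rb$ in the derived directed category of the fiber; all morphism spaces and products are then computed as total complexes of double complexes, and the induction bottoms out in a curve where disk counting is combinatorial (Theorems \ref{th:Pn_fuk} and \ref{th:Pn_equiv_fuk}). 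Your substitute --- reducing to a ``standard pair of pants'' near each vertex of the tiling and then ``globalizing'' --- is not an argument for $n \ge 3$: the computations you cite concern complex dimension one, there is no locality principle that confines the holomorphic polygons contributing to $\m_k$ for the countably many lifted Lagrangians in $\bC^n$ to neighborhoods of vertices, and you yourself flag this matching as the ``main obstacle'' without supplying the mechanism that makes it tractable. Since this matching is exactly the content of the first half of Conjecture \ref{conj:tropical_coamoeba}, the proposal is missing its key idea.

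There are also concrete mismatches with Definition \ref{def:tropical_coamoeba}. In the paper's dictionary the $n$-dimensional cells $P_i$ correspond to vanishing cycles, facets to intersection points, and codimension-two faces to $A_\infty$-operations; your correspondence (thimbles with facets, Floer generators with codimension-two faces) is shifted by one and cannot satisfy the definition. Your degree map, ``label each facet by the index of the vertex $v_i \in \Delta$ to which it is dual,'' cannot be right either: a permutohedron of order $n+1$ has $2^{n+1}-2$ facets, indexed by divisions $B_1 \sqcup B_2$ of $\{1,\dots,n+1\}$ (equivalently by basis elements $\wedge_{i \in B_2} e_i$ of $\wedge^\bullet V$ modulo top and bottom), and the degree is the Maslov index $\# B_2$, not one of the $n+1$ vertex labels of $\Delta$. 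Finally, on the coherent side the collection the tiling actually reproduces is $(\Omega_{\bP^n}^n(n)[n], \dots, \scO_{\bP^n})$, whose morphism algebra is the exterior algebra with wedge composition --- exactly the facet combinatorics --- rather than the line-bundle Beilinson quiver with coordinate arrows and commutation relations; this last slip is repairable via Beilinson's theorem, but as written your quiver description does not match the morphism spaces $(\wedge^{i'-i} V \otimes \rho_{\bsj'-\bsj})^{\bT}$ that the permutohedral facets encode.
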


A tropical coamoeba is a generalization of a dimer model
to higher dimensions.
The importance of dimer models
in mirror symmetry
is pointed out by Feng, He, Kennaway and Vafa
\cite{Feng-He-Kennaway-Vafa} and
elaborated in \cite{Ueda-Yamazaki_NBTMQ,
Ueda-Yamazaki_BTP,
Ueda-Yamazaki_toricdP}.
The works of Bondal and Ruan
\cite{Bondal_DCTV}
and Fang, Liu, Treumann and Zaslow
\cite{Fang_HMSTP, Fang-Liu-Treumann-Zaslow_CCC,
MR3168399}
use constructible sheaves on a real torus
and its universal cover to study equivariant homological mirror symmetry
for toric stacks, and
it is an interesting problem to explore relationship
between their approach and ours.

For any convex lattice polytope $\Delta$ in $\bR^n$
and a vertex $v$ of $\Delta$,
one can obtain another polytope $\Delta'$
by removing $v$ from $\Delta \cap \bZ^n$ and
taking the convex hull of the rest.
On the complex side,
this operation gives a birational map
$X_\Delta \dashrightarrow X_{\Delta'}$
between the corresponding toric stacks,
which in turn gives a full and faithful functor
$$
 \Phi : D^b \coh X_{\Delta'} \hookrightarrow D^b \coh X_\Delta
$$
by a result of Kawamata \cite[Theorem 4.2]{Kawamata_LCBMDC}.
On the symplectic side,
one can choose a one-parameter family $W_t$ of Laurent polynomials
such that the Newton polytope of $W_0$ is $\Delta'$ and
that of $W_t$ for $t \ne 0$ is $\Delta$,
so that a result of Kerr \cite[Theorem 6]{Kerr_WBMSTS}
gives a full and faithful functor
$$
 \Psi : D^b \Fuk W_{\Delta'} \hookrightarrow D^b \Fuk W_\Delta.
$$
It is clear that any lattice polytope can be embedded
into a sufficiently large simplex,
so that any lattice polytope can be obtained
from a sufficiently large simplex
by successively performing this operation.
As a corollary,
one obtains the following:

\begin{corollary} \label{cor:embedding}
For any lattice polytope $\Delta$,
there exist a Laurent polynomial $W'$ and a toric stack $X'$
such that one has full and faithful functors
$
 F : D^b \coh X \hookrightarrow D^b \Fuk W'
$
and
$
 G : D^b \Fuk W \hookrightarrow D^b \coh X',
$
where $X$ is a toric stack associated with $\Delta$ and
$W$ is a general Laurent polynomial
whose Newton polytope is $\Delta$.
\end{corollary}

The lattice polytope associated with $X'$
and the Newton polytope of $W'$
in Corollary \ref{cor:embedding}
are sufficiently large simplexes containing $\Delta$.
As the equivalence in Corollary \ref{cor:hms}
is given explicitly,
one can in principle reduce homological mirror symmetry
for a general toric stack
to the problems of
\begin{itemize}
 \item
the behavior of the derived category of toric stacks
under birational trasformations, and
 \item
the behavior of critical values of Laurent polynomials
under deformations,
\end{itemize}
without any further Floer-theoretic computations
on vanishing cycles.
This is a special case
of the relation between homological mirror symmetry
and the minimal model program
discussed in \cite{Ballard-Favero-Katzarkov_VGITDC,
Diemer-Katzarkov-Kerr_SGR}.

The organization of this paper is as follows:
We collect basic definitions on Fukaya categories
in Section \ref{sc:fuk}.
Symplectic Picard-Lefschetz theory
and homological mirror symmetry for $\bP^2$
by Seidel are recalled in Section \ref{sc:PL} and Section \ref{sc:p2}
respectively,
which are used in Section \ref{sc:p3} to prove
homological mirror symmetry for $\bP^3$.
The Fukaya category of the mirror of $\bP^n$ for general $n$ is
computed in Section \ref{sc:induction}
by an induction on $n$.
In Section \ref{sc:coamoeba},
we define a tropical coamoeba as a combinatorial object
which encode the information of the Fukaya category,
and show that it allows one to summarize
the result in Section \ref{sc:induction}
in a nice way.

{\em Acknowledgment}:
K.~U. thanks
Alexander Esterov,
Akira Ishii and
Dominic Joyce
for useful discussions and remarks.
We also thank the anonymous referee
for suggesting several improvements.
M.~F. is supported by Grant-in-Aid for Young Scientists (No.19.8083).
K.~U. is supported by Grant-in-Aid for Young Scientists (No.18840029).
This work has been done while K.~U. is visiting the University of Oxford,
and he thanks the Mathematical Institute for hospitality
and Engineering and Physical Sciences Research Council
for financial support.

\section{Fukaya categories}
 \label{sc:fuk}

For a $\bZ$-graded vector space
$N = \oplus_{j \in \bZ} N^j$ and an integer $i$,
the $i$-th shift of $N$ to the left
will be denoted by $N[i]$;
$(N[i])^j  = N^{i+j}$.

\begin{definition} \label{def:a_infinity}
An $A_\infty$-category $\scA$ consists of
\begin{itemize}
 \item the set $\Ob(\scA)$ of objects,
 \item for $c_1,\; c_2 \in \Ob(\scA)$,
       a $\bZ$-graded vector space $\hom_\scA(c_1, c_2)$
       called the space of morphisms, and
 \item operations
$$\
 \m_l : \hom_\scA (c_{l-1},c_l) \otimes \dots
          \otimes \hom_\scA (c_0,c_1)
 \longrightarrow \hom_\scA (c_0,c_l)
$$
of degree $2 - l$ for $l=1,2,\ldots$
and $c_i \in \Ob(\scA)$,
$i=0,\dots, l$,
satisfying
the {\em $A_\infty$-relations}
\begin{eqnarray}
 \sum_{i=0}^{l-1} \sum_{j=i+1}^l
  (-1)^{\deg a_1 + \cdots + \deg a_i - i}
  \m_{l+i-j+1}(a_l \otimes \cdots \otimes a_{j+1}
   \otimes 
    \m_{j-i}(a_j \otimes \cdots \otimes a_{i+1})
     \nonumber \\
   \otimes
    a_i \otimes \cdots \otimes a_1 ) = 0,
  \label{eq:A_infty}
\end{eqnarray}
for any positive integer $l$,
any sequence $c_0, \dots, c_l$ of objects of $\scA$,
and any sequence of morphisms
$a_m \in \hom_{\scA}(c_{m-1}, c_m)$
for $m = 1, \dots, l$.
\end{itemize}
\end{definition}


The $A_\infty$-relations \eqref{eq:A_infty}
for $l = 1$, $2$, and $3$ show that
$\m_1$ squares to zero and
$\m_2$ defines an associative operation
on the cohomology of $\m_1$.
The resulting non-unital category is called
the {\em cohomological category} of $\scA$.
An $A_\infty$-category
satisfying
$\m_k = 0$ for $k \ge 3$
corresponds to a {\em differential graded category}
(i.e. a category whose spaces of morphisms are complexes
such that the differential $d$ satisfies the Leibniz rule
with respect to the composition)
by
$$
 d(a) = (-1)^{\deg a} \frakm_1(a), \qquad
 a_2 \circ a_1 = (-1)^{\deg a_1} \frakm_2(a_2, a_1).
$$
The derived category of an $A_\infty$-category
is defined using {\em twisted complexes},
which are introduced
by Bondal and Kapranov
\cite{Bondal-Kapranov_ETC}
for differential graded categories
and generalized to $A_\infty$-categories
by Kontsevich \cite{Kontsevich_HAMS}.
Here we follow the exposition of Seidel
\cite{Seidel_PL} closely.
For an $A_\infty$-category $\scA$,
its {\em additive enlargement} $\Sigma \scA$
is the $A_\infty$-category
whose set of object consists of formal direct sums
$$
 X = \bigoplus_{i \in I} V^i \otimes X^i
$$
where $I$ is a finite set,
$\{ X^i \}_{i \in I}$ is a family of objects of $\scA$, and
$\{ V^i \}_{i \in I}$ is a family of graded vector spaces.
The space of morphisms is given by
$$
 \hom_{\Sigma \scA}
  \lb
   \bigoplus_{i \in I_0} V_0^i \otimes X_0^i,
   \bigoplus_{i \in I_1} V_1^i \otimes X_1^i,
  \rb
  = \bigoplus_{i, j}
     \hom_{\bC}(V_0^i, V_1^j) \otimes \hom_{\scA}(X_0^i, X_1^j)
$$
and the $A_\infty$-operations are
$$
 \frakm_d^{\Sigma \scA}(a_d, \dots, a_1)^{i_d, i_0}
  = \sum_{i_1, \dots, i_d} (-1)^\dagger
     \phi_d^{i_d, i_{d-1}} \circ \cdots \circ \phi_1^{i_1, i_0}
     \otimes \mu_d^{\scA}(x_d^{i_d, i_{d-1}}, \ldots, x_1^{i_1, i_0}),
$$
where
$
 \dagger
  = \sum_{p < q}
     \deg \phi_p^{i_p, i_{p-1}}
      \cdot (\deg x_q^{i_q, i_{q-1}} - 1)
$
and
$
 a_k = (a_k^{ji}) = (\phi_k^{ji} \otimes x_k^{ji}).
$

A twisted complex is a pair
$$
 \lb
  X = \bigoplus_{i \in I} V^i \otimes X^i,
  \delta_X = (\delta_X^{ji})
 \rb
$$
of an object $X$ of $\Sigma \scA$ and a morphism
$\delta \in \hom^1_{\Sigma \scA}(X, X)$,
satisfying the {\em Maurer-Cartan equation}
$$
 \sum_{i=1}^\infty
  \frakm_r^{\Sigma \scA}(\delta_X, \ldots, \delta_X)
 = 0.
$$
Twisted complexes constitute an $A_\infty$-category $\Tw \scA$,
whose $A_\infty$-operations are given by
\begin{eqnarray*}
 \frakm_d^{\Tw \scA}(a_d, \dots, a_1)
  = \sum_{i_0, \dots, i_d}
     \frakm_{d + i_0 + \dots + i_d}^{\Sigma \scA}
      (\overbrace{\delta_{X_d}, \dots, \delta_{X_d}}^{i_d}, a_d, \\
 \underbrace{\delta_{X_{d-1}}, \dots, \delta_{X_{d-1}}}_{i_{d-1}},
 a_{d-1}, \dots, a_1,
 \underbrace{\delta_{X_{0}}, \dots, \delta_{X_{0}}}_{i_{0}}),
\end{eqnarray*}
where the sum is over all $i_0, \dots, i_d \ge 0$.
The $A_\infty$-relations in $\Tw \scA$ comes from
that of $\scA$ and the Maurer-Cartan equation.
The cohomological category $D^b \scA$ of $\Tw \scA$ is triangulated,
and the mapping cone of a closed morphism
$c \in \hom^0_{\Tw \scA}(X_0, X_1)$ is defined by
$$
 \lb
  C = \bC[1] \otimes X_0 \oplus \bC \otimes X_1, \ 
  \delta_C = \begin{pmatrix}
   \bone_{1, 1} \otimes \delta_{X_0} & 0 \\
   - \bone_{1, 0} \otimes c & \bone_{0, 0} \otimes \delta_{X_1}
  \end{pmatrix}
 \rb
$$
where $\bone_{i, j} \in \hom_{\bC}(\bC[i], \bC[j])$
is the identity morphism of degree $i-j$.

The Fukaya category $\Fuk M$
of a symplectic manifold $(M, \omega)$
is an $A_\infty$-category
whose objects are Lagrangian submanifolds of $M$
(together with additional structures
such as gradings, spin structures and flat $U(1)$ bundles on them)
and whose spaces of morphisms are
Lagrangian intersection Floer complexes
\cite{Fukaya_MHACFH, Fukaya-Oh-Ohta-Ono, Seidel_PL}:
For two objects $L_1$ and $L_2$ intersecting transversely,
$\hom(L_1, L_2)$ is a graded vector space
spanned by intersection points
$L_1 \cap L_2$.
For a positive integer $k$,
a sequence
$(L_0, \ldots, L_k)$ of objects,
and morphisms $p_l \in L_{\ell-1} \cap L_{\ell}$
for $\ell = 1, \ldots, k$,
the $A_\infty$-operation $\m_k$ is given by
counting the virtual number of holomorphic disks
with Lagrangian boundary conditions;
$$
 \m_k(p_k,\ldots,p_1) = \sum_{p_0 \in L_0 \cap L_k}
     \# \scMbar_{k+1}(L_0, \ldots, L_k; p_0, \ldots, p_k) p_0.
$$
Here,
$
 \scMbar_{k+1}(L_0, \ldots, L_k; p_0, \ldots, p_k)
$
is the stable compactification of
the moduli space of holomorphic maps
$
 \phi : D^2 \to M
$
from the unit disk $D^2$
with $k + 1$ marked points
$(z_0, \ldots, z_k)$
on the boundary
respecting the cyclic order,
with the following boundary condition:
Let $\partial_l D^2 \in \partial D^2$
be the interval
between $z_l$ and $z_{l+1}$,
where we set $z_{k+1} = z_0$.
Then
$\phi(\partial_l D^2) \subset L_\ell$ and $\phi(z_l) = p_l$
for $\ell = 0, \dots, k$.

Let $M$ be a symplectic manifold and
$p : \Mtilde \to M$ be a regular covering
with the covering transformation group $G$,
so that there is an exact sequence
$$
 1 \to \pi_1(\Mtilde) \xto{p_*} \pi_1(M) \to G \to 1
$$
of groups.
Let
$
 i : L \hookrightarrow M
$
be a Lagrangian submanifold.
If the image of
$
 i_* : \pi_1(L) \to \pi_1(M)
$
is contained in the image of $p_*$,
then the set of connected components of $\Ltilde = p^{-1}(L)$
forms a torsor over $G$,
so that one has
$$
 \Ltilde = \coprod_{g \in G} \Ltilde_g
$$
for a choice of a connected component $\Ltilde_e \subset \Ltilde$.
Given a pair $(L, L')$ of such Lagrangian submanifolds,
one has an isomorphism
$$
 \hom_{\Fuk M}(L, L')
  \cong
   \bigoplus_{g \in G}
    \hom_{\Fuk \Mtilde}(\Ltilde_e, \Ltilde_g'),
$$
which is compatible with the $A_\infty$-operations.

\section{Symplectic Picard-Lefschetz theory}
 \label{sc:PL}

Let
$
 \pi : E \to \bC
$
be a holomorphic function
on a K\"{a}hler manifold $E$,
whose K\"{a}hler form is exact.
We assume that $E$ is complete
as a Riemannian manifold,
and $\| \nabla \pi \|$ is a proper function on $E$.
The mirrors of toric Fano stacks satisfy
these conditions
\cite[Example 6.1]{Seidel_suspension}.
The map $\pi$ is said to be
an {\em exact Lefschetz fibration}
if all the critical points of $\pi$ are non-degenerate.
This means that for any critical point $p \in E$,
one can choose a holomorphic local coordinate
$(x_1, \dots, x_n)$
of $E$ around $p$ such that
\begin{equation} \label{eq:quadratic}
 \pi(x_1, \dots, x_n) = x_1^2 + \cdots + x_n^2 + w,  
\end{equation}
where $w$ is the critical value of $\pi$.
For the moment,
we assume that all the critical values are distinct
and $0$ is a regular value of $\pi$.
We choose the origin as the base point
and write
$$
 E_0 = \pi^{-1}(0).
$$
A {\em vanishing path} is an embedded path
$\gamma : [0, 1 ] \to \bC$
such that
\begin{itemize}
 \item
$\gamma(0) = 0$,
 \item
$\gamma(1)$ is a critical value of $\pi$, and
 \item
$\gamma(t)$ is not a critical value of $\pi$
for $t \in (0, 1)$.
\end{itemize}
A {\em distinguished set of vanishing paths}
is an ordered set $(\gamma_i)_{i=1}^m$
of vanishing paths $\gamma_i: [0, 1] \to \bC$
such that
\begin{itemize}
 \item $\{ \gamma_i(1) \}_{i=1}^m$ is the set of critical values of $\pi$,
 \item images of $\gamma_i$ and $\gamma_j$ for $i \ne j$
intersect only at the origin,
 \item $\gamma_i'(0) \neq 0$ for $i = 1, \dots, m$, and
 \item $\arg \gamma_1'(0) > \cdots > \arg \gamma_m'(0)$
        for a suitable choice of a branch of the argument map.
\end{itemize}
Let $\gamma$ be a vanishing path
and $y$ be the critical point of $\pi$ above $\gamma(1)$.
The {\em vanishing cycle} along $\gamma$ is
the cycle of $E_0$
which collapses to the critical point $y$
by the symplectic parallel transport along $\gamma$;
$$
 V_\gamma = \lc x \in E_0 \left| \,
           \lim_{t \rightarrow 1} \gammatilde_x(t) = y \rc \right. .
$$
Here, the horizontal lift
$\gammatilde_x : [0,1) \to E$
of $\gamma : [0,1] \to \bC$
starting from $x \in E_0$ is defined
by the condition that
the tangent vector of the curve $\gammatilde$
is orthogonal to the tangent space of the fiber
with respect to the K\"{a}hler form.

The vanishing cycle $V_\gamma$ is a Lagrangian $(n-1)$-sphere $E_0$.
The trajectory
$$
 \Delta_\gamma = \bigcup_{x \in V_\gamma} \Image \gammatilde_x
$$
of the vanishing cycle is called the {\em Lefschetz thimble}.
It is a Lagrangian ball in $E$
whose boundary is the corresponding vanishing cycle;
$$
 \partial \Delta_\gamma = V_\gamma.
$$
For a distinguished set $(\gamma_i)_{i=1}^m$ of vanishing paths,
the ordered set
$$
 \bsV
  = (V_{\gamma_1}, \dots, V_{\gamma_n})
$$
is called the {\em distinguished basis of vanishing cycles}.

To define the Fukaya category of the Lefschetz fibration,
let
$$
 \beta :
  \Etilde = \{ (x, y) \in E \times \bC \mid \pi(x) = y^2 \}
    \to E
$$
be the double cover of $E$
branched along the fiber
$E_0 = \pi^{-1}(0)$ over the origin.
Then
the covering transformation
$
 \iota : (x, y) \mapsto (x, -y)
$
defines a $\bZ / 2 \bZ$-action on $\Etilde$,
which induces a $\bZ / 2 \bZ$-action on the Fukaya category
$\Fuk \Etilde$ of $\Etilde$.
Roughly speaking,
the Fukaya category $\frakF(\pi)$ of the Lefschetz fibration $\pi$ is
defined as the $\iota$-invariant part
of $\Fuk \Etilde$;
objects of $\frakF(\pi)$ are $\iota$-invariant
Lagrangian submanifolds of $\Etilde$,
and the space of morphisms in $\frakF(\pi)$
are $\iota$-invariant part of morphisms in $\Fuk \Etilde$.
The precise definition is given in
\cite[Section 18]{Seidel_PL}.

There are two important classes of
$\iota$-invariant Lagrangian submanifolds in $\Etilde$.
One of them, called of type (U),
is the inverse image
$$
 \Ltilde = \beta^{-1}(L) = \Ltilde_+ \coprod \Ltilde_-
$$
of a Lagrangian submanifold $L$
whose image by $\pi$ is contained in a simply-connected
domain inside $\bCx$ (i.e., $\bC$ minus the base point).
It is the disjoint union of two connected components
$\Ltilde_+$ and $\Ltilde_-$.
The other, called of type (B), is
the inverse image
$$
 \Deltatilde_\gamma = \beta^{-1}(\Delta_\gamma)
$$
of the Lefschetz thimble $\Delta_\gamma$
for a vanishing path $\gamma$.
It is a Lagrangian $n$-sphere in $\Etilde$.

For type (U) Lagrangian submanifolds
$\Ltilde_0$ and $\Ltilde_1$ of $\Etilde$,
their intersections are two disjoint copies of
intersections between $L_0$ and $L_1$ in $E$.
By taking $\iota$-invariant, one can show that
there is a natural isomorphism
$$
 \hom_{\frakF(\pi)}( \Ltilde_0, \Ltilde_1)
  \cong \hom_{\Fuk E}( L_0, L_1)
$$
of vector spaces,
which lifts to a cohomologically full and faithful $A_\infty$-functor
$$
 \Fuk E \to \frakF(\pi).
$$

For type (B) Lagrangian submanifolds,
the situation is a little more complicated,
but the conclusion is that
the full $A_\infty$-subcategory of $\frakF(\pi)$
consisting of
$
 \bsDeltatilde = (\Deltatilde_{\gamma_1}, \dots, \Deltatilde_{\gamma_m})
$
for a distinguished set $(\gamma_i)_{i=1}^m$ of vanishing paths
is quasi-isomorphic to the {\em directed subcategory}
$\dirFuk(\bsV)$ of $\Fuk E_0$,
whose set of objects is the distinguished basis
$
 \bsV = (V_{\gamma_1}, \dots, V_{\gamma_m})
$
of vanishing cycles,
whose spaces of morphisms are given by
$$
 \hom_{\dirFuk(\bsV)}(V_{\gamma_i}, V_{\gamma_j}) =
 \begin{cases}
   \bC \cdot \id_{V_{\gamma_i}} & i = j, \\
   \hom_{\Fuk E_0}(V_{\gamma_i}, V_{\gamma_j}) & i < j, \\
   0 & \text{otherwise},
 \end{cases}
$$
and non-trivial $A_\infty$-operations coincide
with those in $\Fuk E_0$.
We write this $A_\infty$-category as $\Fuk \pi$.
Although $\Fuk \pi$ depends on the choice
of a distinguished set of vanishing paths,
the derived category $D^b \Fuk \pi$ is independent of this choice
and gives an invariant of the Lefschetz fibration $\pi$.

\begin{figure}
\centering
\input{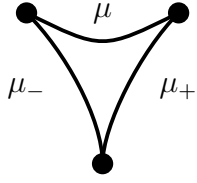}
\caption{A matching path}
\label{fg:matching_path}
\end{figure}


Let $\mu : [-1, 1] \to \bC$ be an embedded path in $\bC$ such that
$\mu^{-1}(\Critv (\pi)) = \{ -1, 1 \}$.
One can deform $\mu$ and split it into two pieces
$
 \mu_{\pm}(t) = \mu(\pm t)
$
to obtain a pair of vanishing paths
as shown in Figure \ref{fg:matching_path}.
If the vanishing cycles $V_{\mu_-}$ and $V_{\mu_+}$
are isotopic as exact framed Lagrangian $(n-1)$-spheres in $E_0$,
then $\mu$ is called a {\em matching path}.
In this case,
one can perturb $\Delta_{\mu_{+}} \cup \Delta_{\mu_{-}}$
to obtain a Lagrangian $n$-sphere $\Sigma_\mu$ in $E$
called the {\em matching cycle}.

Symplectic Picard-Lefschetz theory describes
the action of the symplectic Dehn-twist along a Lagrangian sphere
on the derived Fukaya category.
It follows that the type (U) Lagrangian submanifold
$
 \Sigmatilde_\mu = \beta^{-1}(\Sigma_\mu)
$
of $\Etilde$ coming from a matching path $\mu$
is isomorphic to the mapping cone
over the (unique up to scalar) non-trivial morphism
from $\Deltatilde_{\mu_-}$ to $\Deltatilde_{\mu_+}$
in the derived Fukaya category $D^b \frakF(\pi)$
of the Lefschetz fibration;
$$
 \Sigmatilde_\mu \cong \Cone( \Deltatilde_{\mu_-} \to \Deltatilde_{\mu_+} ).
$$
This is important
since it allows one to reduce Floer-theoretic computation
for matching cycles in $\Fuk E$
to that for vanishing cycles in $\Fuk E_0$.
By iterating this process,
one ends up with the case of symplectic 2-manifolds,
where Lagrangian submanifolds are simple closed curves
and the problem of counting holomorphic disks is purely combinatorial.

A natural source of matching paths is
a {\em Lefschetz bifibration}.
It is a diagram
\vspace{7mm}
\begin{equation*}
\begin{psmatrix}[colsep=1.5]
 \scE & \bC^2 & \bC
\end{psmatrix}
\psset{shortput=nab,arrows=->,labelsep=3pt,nodesep=2pt}
\small
\ncline{1,1}{1,2}_{\varpi}
\ncline{1,2}{1,3}_{\psi}
\ncarc[arcangle=30]{1,1}{1,3}^{\Psi = \psi \circ \varpi}
\end{equation*}
with certain genericity conditions,
which implies that
for any critical point of $\Psi$,
there are local holomorphic coordinates of $\scE$ and $\bC^2$
such that
$$
 \varpi(x_1, \dots, x_{2 n})
  = (x_1^2 + x_2^2 + \cdots + x_{2 n}^2, x_1),
 \qquad
 \psi(y_1, y_2) = y_1.
$$
Then the map
$$
 \scE_w \xto{\varpi_w} \scS_w
$$
from $\scE_w = \Psi^{-1}(w)$ to $\scS_w = \psi^{-1}(w)$
for a general $w \in \bC$ is a Lefschetz fibration,
and by chasing the trajectory
of critical values of $\varpi_w$
as $w$ varies along a vanishing path $\gamma$,
one obtains a matching path $\mu$ in $\scS_0$
such that the matching cycle $\Sigma_\mu$ is
Hamiltonian isotopic to the vanishing cycle $V_\gamma$.

\section{Homological mirror symmetry for $\bP^2$}
 \label{sc:p2}

We recall homological mirror symmetry for $\bP^2$
proved by Seidel \cite{Seidel_VC2} in this section.
The mirror of $\bP^2$
is given by the Laurent polynomial
$$
 W(x, y) = x + y + \frac{1}{x y},
$$
which has critical points
$
 (x, y) = (1, 1), \, (\omega, \omega), \, (\omega^2, \omega^2)
$
with critical values
$
 3, \, 3 \omega, \, 3 \omega^2.
$
Here
$
 \omega=\exp(2 \pi \sqrt{-1}/3)
$
is a primitive cubic root of unity.
Let $(\gamma_i)_{i=1}^3$ be the distinguished set of vanishing paths
obtained as the straight line segments
from the origin to the critical values of $W$
as shown in Figure \ref{fg:Z3_path}.
The corresponding vanishing cycles are denoted by $(C_i)_{i=1}^3$.

Consider the Lefschetz bifibration
\vspace{7mm}
\begin{equation} \label{eq:lefschetz_bifibration_P2}
\begin{psmatrix}[colsep=1.5]
 (\bCx)^{2} & \bC \times \bCx & \bC
\end{psmatrix}
\psset{shortput=nab,arrows=->,labelsep=3pt,nodesep=3pt}
\small
\ncline{1,1}{1,2}_{\varpi}
\ncline{1,2}{1,3}_{\psi}
\ncarc[arcangle=30]{1,1}{1,3}^{W = \psi \circ \varpi}
\end{equation}
where
$$
 \varpi(x, y) = \lb x + y + \frac{1}{x y}, y \rb
$$
and
$$
 \psi(u, v) = u.
$$
The critical points of
$$
 \varpi_t : W^{-1}(t) \to \psi^{-1}(t) \cong \Spec \bC[y, y^{-1}]
$$
are given by
\begin{gather*}
 2 x + \frac{1}{x^2} = t,
\end{gather*}
with critical values
$$
 y = \frac{1}{x^2}.
$$
The critical values are given by
$
 y = (-2)^{2/3}
$
at $t = 0$,
which moves as shown in Figure \ref{fg:Z3_sheet_vc}
along the vanishing paths $(\gamma_i)_{i=1}^3$.
These trajectories $(\mu_i)_{i=1}^3$
are matching paths
corresponding to $(C_i)_{i=1}^3$.
The fiber $W^{-1}(0)$ can be compactified to an elliptic curve
by adding one point over $y=0$ and two points over $y=\infty$.
The vanishing cycles on $W^{-1}(0)$
are shown in Figure \ref{fg:Z3_surface_vc}.

\begin{figure}[thp]
\begin{minipage}{.5 \linewidth}
\centering
\input{Z3_path.pst}
\caption{A path on the $W$-plane}
\label{fg:Z3_path}
\end{minipage}
\begin{minipage}{.5 \linewidth}
\centering
\input{Z3_sheet_vc.pst}
\caption{Matching paths on the $y$-plane}
\label{fg:Z3_sheet_vc}
\end{minipage}
\end{figure}

\begin{figure}[htbp]
\centering
\input{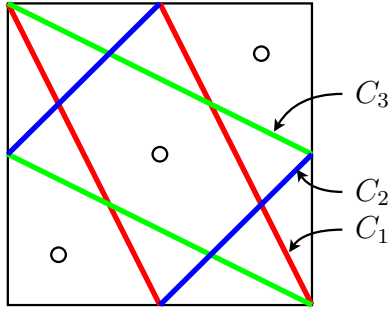}
\caption{Vanishing cycles on $W^{-1}(0)$}
\label{fg:Z3_surface_vc}
\end{figure}

On the mirror side,
one has the full exceptional collection
$$
 (E_1, E_2, E_3) = (\Omega_{\bP^2}^2(2)[2], \Omega_{\bP^2}(1)[1], \scO_{\bP^2})
$$
by Beilinson \cite{Beilinson}.
The endomorphism algebra is given by
$$
 \hom(E_i, E_j) =
\begin{cases}
 \wedge^{j-i} V & i \le j, \\
 0 & \text{otherwise},
\end{cases}
$$
where $V$ is a 3-dimensional vector space.
This endomorphism algebra is formal as an $A_\infty$-algebra
with respect to the standard enhancement of $D^b \coh \bP^2$.
One can easily see
that there is an isomorphism
$$
 \dirFuk W^{-1}(0) \to D^b \coh \bP^2
$$
of $A_\infty$-categories
sending $C_i$ to $E_i$ for $i=1,2,3$.
Indeed,
one can see in Figure \ref{fg:Z3_surface_vc}
that $C_i$ and $C_j$ for $i \ne j$ intersect at three points,
and six triangles bounded by $C_1$, $C_2$ and $C_3$
correspond to six non-zero compositions in
$$
 \hom(E_2, E_3) \otimes \hom(E_1, E_2) \to \hom(E_1, E_3).
$$
The torus-equivariant version of homological mirror symmetry for $\bP^2$
(and more generally for toric del Pezzo surfaces)
is discussed in detail in \cite{Ueda-Yamazaki_toricdP}.

\section{Homological mirror symmetry for $\bP^3$}
 \label{sc:p3}

The mirror of the projective space $\bP^3$
is given by the Laurent polynomial
$$
 W(x, y, z) = x + y + z + \frac{1}{x y z}
$$
with critical points
$
 x = y = z = \pm 1, \pm \sqrt{-1}
$
and critical values $\pm 4, \pm 4 \sqrt{-1}$.
Choose a distinguished set of vanishing paths
$( \gamma_i )_{i=1}^4$
as the straight line segments
from the origin to the critical values
as shown in Figure \ref{fg:P3_vanishing_paths},
and let $(C_i)_{i=1}^4$ be the corresponding
distinguished basis of vanishing cycles.
To use Picard-Lefschetz theory,
consider the Lefschetz bifibration
\vspace{7mm}
\begin{equation} \label{eq:lefschetz_bifibration1}
\begin{psmatrix}[colsep=1.5]
 (\bCx)^{3} & \bC \times \bCx & \bC
\end{psmatrix}
\psset{shortput=nab,arrows=->,labelsep=3pt,nodesep=3pt}
\small
\ncline{1,1}{1,2}_{\varpi}
\ncline{1,2}{1,3}_{\psi}
\ncarc[arcangle=30]{1,1}{1,3}^{W = \psi \circ \varpi}
\end{equation}
where
$$
 \varpi(x, y, z) = \lb x + y + z + \frac{1}{x y z}, z \rb
$$
and
$$
 \psi(u, v) = u.
$$
The critical points of
$$
 \varpi_t : W^{-1}(t) \to \psi^{-1}(t) \cong \Spec \bC[z, z^{-1}]
$$
are given by
\begin{gather*}
 x = y, \qquad 3 x + \frac{1}{x^3} = t,
\end{gather*}
with critical values
$$
z = \frac{1}{x^3}.
$$
The critical values are given by
$
 z = (-3)^{3/4}
$
at $t = 0$,
which moves as shown in Figure \ref{fg:P3_z-plane_mp}
along the vanishing paths $(c_i)_{i=1}^4$.
These trajectories $(\mu_i)_{i=1}^4$
are matching paths
corresponding to $(C_i)_{i=1}^4$.
Take $z = 1$ as a base point and
choose a distinguished set $(\delta_i)_{i=1}^4$
of vanishing paths for $\varpi_0$
as straight line segments from the base point
as shown in Figure \ref{fg:P3_z-plane_vp}.
The fiber $\varpi_0^{-1}(z)$ is a branched double cover of $\bCx$
by the $y$-projection
$$
\begin{array}{cccc}
 \pi_z : & \varpi_0^{-1}(z) & \to & \bCx \\
 & \vin & & \vin \\
 & (x, y, z) & \mapsto & y.
\end{array}
$$
Figure \ref{fg:P3_y-plane_mp} shows
the behavior of these branch points
along vanishing paths $(\delta_i)_{i=1}^4$,
which can be considered as matching paths
coming from the Lefschetz bifibration
\vspace{7mm}
\begin{equation} \label{eq:lefschetz_bifibration2}
\begin{psmatrix}[colsep=1.5]
 W^{-1}(0) & \bCx \times \bCx & \bCx \\
\end{psmatrix}
\psset{shortput=nab,arrows=->,labelsep=3pt}
\small
\ncline{1,1}{1,2}_{\pi}
\ncline{1,2}{1,3}_{\varphi}
\ncarc[arcangle=30]{1,1}{1,3}^{\varpi_0 = \varphi \circ \pi}
\end{equation}
where
$
 \pi(x, y, z) = (z, y)
$
and
$
 \varphi(z, y) = z.
$
Note that one has an inductive structure here,
as \eqref{eq:lefschetz_bifibration2} is almost identical to
\eqref{eq:lefschetz_bifibration_P2}.

One can see that
the number of intersection points of $C_i$ and $C_j$
for $i < j$ is equal to the dimension of $\wedge^{j-i} V$,
where $V$ is a vector space of dimension four.
As an example,
consider the intersection of $C_1$ and $C_2$.
The matching paths $\mu_1$ and $\mu_2$ intersect
at one critical value of $\varpi_0$ and
one regular value of $\varpi_0$.
The intersection of $C_1$ and $C_2$ over
the critical value of $\varpi_0$ consists of one point,
i.e., the critical point of $\varpi_0$.
The intersection of $C_1$ and $C_2$ over the regular value of $\varpi_0$
consist of three points, as one can see from Figure
\ref{fg:P3_y-plane_mp}
(cf. also Figures \ref{fg:Z3_sheet_vc}
and \ref{fg:Z3_surface_vc}).
As for the intersection of $C_1$ and $C_3$,
the corresponding matching paths intersect
at two regular points of $\varpi_0$,
and the intersection over each of them consists of three points.

\begin{figure}[htbp]
\begin{minipage}{.5 \linewidth}
\centering
\input{P3_vanishing_paths.pst}
\caption{A distinguished set of vanishing paths}
\label{fg:P3_vanishing_paths}
\end{minipage}
\begin{minipage}{.5 \linewidth}
\centering
\input{P3_z-plane_mp.pst}
\caption{Matching paths on the $z$-plane}
\label{fg:P3_z-plane_mp}
\end{minipage}
\begin{minipage}{.5 \linewidth}
\centering
\input{P3_z-plane_vp.pst}
\caption{A distinguished set of vanishing paths}
\label{fg:P3_z-plane_vp}
\end{minipage}
\begin{minipage}{.5 \linewidth}
\centering
\input{P3_y-plane_mp.pst}
\caption{Matching paths on the $y$-plane}
\label{fg:P3_y-plane_mp}
\end{minipage}
\begin{minipage}{.5 \linewidth}
\centering
\input{P3_z-plane_loop.pst}
\caption{A loop in the $z$-plane}
\label{fg:P3_z-plane_loop}
\end{minipage}
\begin{minipage}{.5 \linewidth}
\centering
\input{P3_y-palne_rotate.pst}
\caption{The behavior of branch points of $\pi_z$}
\label{fg:P3_y-plane_rotate}
\end{minipage}
\end{figure}

To use Picard-Lefschetz theory
to do computations in the Fukaya category of $W$,
consider the pull-back
$$
\begin{CD}
 W^{-1}(0)^{\sim} @>{\varpitilde_0}>> \bC \\
 @VVV @VV{\exp}V \\
 W^{-1}(0) @>{\varpi_0}>> \bCx
\end{CD}
$$
of $\varpi_0$ by the universal cover of the algebraic torus.
The existence of infinitely many critical points
for a given critical value does not cause any problem,
since the corresponding vanishing cycles do not intersect.
The passage from $W^{-1}(0)$ to $W^{-1}(0)^\sim$
can be taken into account
by noting that
as one goes counterclockwise around the origin in the $z$-plane
as shown in Figure \ref{fg:P3_z-plane_loop},
the branch points of $\pi_z$ rotates clockwise by $2 \pi / 3$
as in Figure \ref{fg:P3_y-plane_rotate}.

The universal cover of the $z$-plane is obtained by
cutting the $z$-plane along the dashed line
in Figure \ref{fg:P3_z-plane_vp}
and gluing infinitely-many copies of it.
We set the point $z = 1$ on the zero-th sheet
as the base point $*$ and
take a distinguished set of vanishing paths for $\varpitilde_0$
as in Figure \ref{fg:P3_z-plane_cover_vp}.

\begin{figure}
\begin{minipage}{\linewidth}
\centering
\input{P3_z-plane_cover_vp.pst}
\caption{Vanishing paths on the universal cover}
\label{fg:P3_z-plane_cover_vp}
\end{minipage}
\begin{minipage}{\linewidth}
\centering
\input{P3_z-plane_cover_mp.pst}
\caption{Matching paths on the universal cover}
\label{fg:P3_z-plane_cover_mp}
\end{minipage}
\end{figure}

Let $\Deltabar_1$, $\Deltabar_2$, and $\Deltabar_3$
be the vanishing cycles of $\varpi_0$
along the vanishing paths
$\delta_1$, $\delta_2$ and $\delta_3$ respectively.
We write the vanishing cycles of $\varpitilde_0$
along the vanishing paths $\delta_i$
in Figure \ref{fg:P3_z-plane_cover_vp}
as $\Delta_i$ for $i \in \bZ$.
Let further $\scB$ be the Fukaya category of $\varpi_0^{-1}(1)$
consisting of $\{ \Deltabar_i \}_{i=1}^3$ and
$\scBtilde$ be the Fukaya category of $\varpitilde_0^{-1}(*)$
consisting of $\{ \Delta_i \}_{i \in \bZ}$.
Then one has a quasi-equivalence
$$
 \scBtilde \simto \scB
$$
of $A_\infty$-categories sending $\Delta_i$
to $\Deltabar_{\ibar}$,
where $\ibar$ is $i$ modulo 3.
We write the directed subcategory of $\scBtilde$
with respect to the order
$$
 \Delta_i < \Delta_{j}, \qquad i < j
$$
as $\scAtilde$.
The spaces of morphisms between $\Delta_i$ can be written as
$$
 \hom_{\scAtilde}(\Delta_i, \Delta_j) =
  \begin{cases}
   \bC \cdot \id_i
    & i = j, \\
   \bC \cdot \id_{i, j} \oplus \bC \cdot \id^\vee_{i, j}
    & i < j \text{ and } j \equiv i \mod 3, \\
   \Vbar & i < j \text{ and } j \equiv i + 1 \mod 3, \\
   \wedge^2 \Vbar & i < j \text{ and } j \equiv i + 2 \mod 3, \\
   0 & i > j,
  \end{cases}
$$
where $\id_i$ is the unit and
$$
 \Vbar = \vspan \lc e_1, e_2, e_3 \rc
$$
is a vector space of dimension three.
One can show,
by direct counting of triangles just as in Section \ref{sc:p2},
that the $A_\infty$-operation $\frakm_2$ on the spaces of morphisms
is given by the wedge product,
where $\id_{i, j}$ and $\id^\vee_{i, j}$ are identified
with the elements $1 \in \wedge^0 \Vbar$ and
$e_1 \wedge e_2 \wedge e_3 \in \wedge^3 \Vbar$ respectively.
Higher $A_\infty$-operations on $\scAtilde$ are irrelevant
for the argument below.

Let $C_i$ for $i \in \bZ$ be the lift to $W^{-1}(0)^\sim$
of a vanishing cycle on $W^{-1}(0)$,
which corresponds to the matching path $\mu_i$
obtained by concatenating $\delta_i$ and $\delta_{i+3}$
as in Figure \ref{fg:P3_z-plane_cover_mp}.
Let further $\Fuk W^{-1}(0)^\sim$ be the Fukaya category of $W^{-1}(0)^\sim$
consisting of $\{ C_i \}_{i \in \bZ}$ and
$\dirFuk W^{-1}(0)^\sim$ be its directed subcategory
with respect to the order
$C_i < C_j$ for $i < j$.
By symplectic Picard-Lefschetz theory
recalled in Section \ref{sc:PL},
there is a cohomologically full and faithful functor
$$
 \Fuk W^{-1}(0)^\sim \to D^b \scAtilde,
$$
which maps the objects as
$$
 C_i \mapsto \Cone \lb \Delta_{i} \xto{\id_{i, i+3}} \Delta_{i+3} \rb.
$$

On the mirror side,
the passage to the universal cover of the $z$-plane
corresponds to working equivariantly
with respect to the subgroup
$$
 \bT_3 = \{ (\alpha, \beta, \gamma) \in \bT \mid \alpha = \beta = 1\}
$$
of the torus $\bT \cong (\bCx)^3$ acting on $\bP^3$ by
$$
\begin{array}{cccc}
 \bT \ni (\alpha, \beta, \gamma) : &
  \bP^3 & \to & \bP^3 \\
 & \vin & & \vin \\
 & [x_0:x_1:x_2:x_3] & \mapsto &
  [x_0:\alpha x_1:\beta x_2:\gamma x_3].
\end{array}
$$
The full exceptional collection
$$
 (E_1, E_2, E_3, E_4)
  = (\Omega_{\bP^3}^3(3)[3],
      \Omega_{\bP^3}^2(2)[2],
      \Omega_{\bP^3}^1(1)[1],
      \scO_{\bP^3})
$$
admits a natural $\bT$-linearization,
so that the endomorphism algebra is given by
$$
 \hom(E_i, E_j)
  = \begin{cases}
 \wedge^{j - i} V & i \le j, \\
 0 & \text{otherwise},
  \end{cases}
$$
with the natural $\bT$-action.
Moreover, this endomorphism algebra is formal
as an $A_\infty$-algebra with respect to a standard enhancement
of $D^b \coh^\bT \bP^3$.
Now it is easy to see that
there is an $A_\infty$-functor
$$
 \dirFuk W^{-1}(0)^\sim \to D^b \coh^{\bT_3} \bP^3
$$
sending $C_{i + 4 j}$ to $E_i \otimes \rho_j$,
where $\rho_j : \bT_3 \to \bCx$ for $j \in \bZ$ is
the one-dimensional representation
sending $(1, 1, \gamma) \in \bT_3$ to $\gamma^j$;
for example,
one has
\begin{align*}
 \hom(E_1 \otimes \rho_i, E_2 \otimes \rho_j)
  &= \begin{cases}
      \bC \cdot e_4 & j = i - 1, \\
      \Vbar & j = i, \\
      0 & \text{otherwise},
  \end{cases} \\
 \hom(E_1 \otimes \rho_i, E_3 \otimes \rho_j)
  &= \begin{cases}
      \Vbar \wedge e_4 & j = i - 1, \\
      \wedge^2 \Vbar & j = i, \\
      0 & \text{otherwise},
  \end{cases} \\
 \hom(E_1 \otimes \rho_i, E_4 \otimes \rho_j)
  &= \begin{cases}
      (\wedge^2 \Vbar) \wedge e_4 & j = i - 1, \\
      \wedge^3 \Vbar & j = i, \\
      0 & \text{otherwise},
  \end{cases}
\end{align*}
which exactly matches the computation
in the Fukaya category,
as we show for general $n$ in Section \ref{sc:induction}.
This suffices to show the equivalence
$$
 D^b \dirFuk W^{-1}(0)^\sim \cong D^b \coh^{\bT_3} \bP^3,
$$
which induces the equivalence
$$
 D^b \Fuk W \cong D^b \coh \bP^3
$$
by passing to the non-equivariant situation.

\section{Inductive description of the Fukaya category}
 \label{sc:induction}

The mirror of the projective space $\bP^n$
is given by the Laurent polynomial
\begin{equation} \label{eq:Pn_mirror}
 W(x_1, \ldots, x_n) = x_1 + \cdots + x_n + \frac{1}{x_1 \cdots x_n},
\end{equation}
with critical points
$$
 x_1 = \cdots = x_n = \zeta^{1 - i},
  \qquad \zeta = \exp(2 \pi \sqrt{-1} / (n + 1) ),
  \qquad i = 1, \dots, n + 1
$$
and critical values $(n + 1) \zeta^{1 - i}$.
Choose a distinguished set of vanishing paths
$( \gamma_i )_{i=1}^{n+1}$
as the straight line segments
from the origin to the critical values,
so that $\gamma_i(1) = \zeta^{1-i}$.
The Fukaya category of $W$
consisting of vanishing cycles
$C_i$ along $\gamma_i$ for $i = 1, \dots, n + 1$
will be denoted by $\Fuk W$.

\begin{theorem} \label{th:Pn_fuk}
The spaces of morphisms in $\Fuk W$ are given by
$$
 \hom(C_i, C_j) =
\begin{cases}
 \bC \cdot \id_{C_i} & i = j, \\
 \wedge^{j-i} V & i < j, \\
 0 & \text{otherwise},
\end{cases}
$$
where $V$ is an $(n+1)$-dimensional vector space and
an element of $\wedge^i V$ has degree $i$.
The $A_\infty$-operations $\frakm_k$ are given
by the wedge product for $k = 2$, and vanish for $k \ne 2$.
\end{theorem}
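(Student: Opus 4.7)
The plan is to proceed by induction on $n$, following exactly the strategy deployed for $\bP^3$ in Section~\ref{sc:p3}. The base cases $n = 1$ and $n = 2$ are, respectively, the immediate computation for the mirror of $\bP^1$ (two vanishing cycles meeting transversally in two points, so that $\hom(C_1, C_2)$ is a two-dimensional $V$) and Seidel's classical computation for the mirror of $\bP^2$. The case $n = 3$ was treated in Section~\ref{sc:p3} and provides a convenient start for the induction.

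For the inductive step, assuming the theorem for $\bP^{n-1}$, I would consider the Lefschetz bifibration
\begin{equation*}
 \varpi(x_1, \ldots, x_n) = \lb W(x_1, \ldots, x_n),\, x_n \rb, \qquad
 \psi(u, z) = u,
\end{equation*}
as in \eqref{eq:lefschetz_bifibration1}. A direct calculation places the critical points of $\varpi_0$ on $W^{-1}(0)$ on the locus $x_1 = \cdots = x_n$ with $x_n^{n+1} = (-n)^n$, producing $n + 1$ critical values in the $z$-plane, and identifies the fiber $\varpi_0^{-1}(z)$, after a rescaling of the coordinates $x_1, \ldots, x_{n-1}$, with a level set of a Laurent polynomial whose Newton polytope is that of the mirror of $\bP^{n-1}$. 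Applying the induction hypothesis to the fiber supplies $n$ vanishing cycles $\Deltabar_1, \ldots, \Deltabar_n$ in $\varpi_0^{-1}(*)$ with
\begin{equation*}
 \hom(\Deltabar_i, \Deltabar_j) = \wedge^{j-i} \Vbar, \qquad \dim \Vbar = n,
\end{equation*}
for $i < j$, with $\frakm_2$ the wedge product and higher operations vanishing.

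Passing to the universal cover of $\bCx_z$ as in Section~\ref{sc:p3}, the monodromy around the origin cyclically permutes the $\Deltabar_i$, so the lifted vanishing cycles $\{\Delta_i\}_{i \in \bZ}$ of $\varpitilde_0$ generate a directed $A_\infty$-category $\scAtilde$ whose morphism spaces take exactly the form
\begin{equation*}
 \hom_{\scAtilde}(\Delta_i, \Delta_j) =
 \begin{cases}
  \bC \cdot \id_i & i = j, \\
  \bC \cdot \id_{i, j} \oplus \bC \cdot \id^\vee_{i, j}
    & i < j,\ j \equiv i \mod n, \\
  \wedge^k \Vbar & i < j,\ j \equiv i + k \mod n,\ 0 < k < n, \\
  0 & i > j,
 \end{cases}
\end{equation*}
which is the obvious generalization of the $\bP^3$ formula. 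Each matching path $\mu_i$ connects $\delta_i$ with $\delta_{i+n}$, so by the Picard-Lefschetz dictionary of Section~\ref{sc:PL},
\begin{equation*}
 C_i \cong \Cone \lb \Delta_i \xto{\id_{i, i+n}} \Delta_{i+n} \rb
\end{equation*}
in $D^b \scAtilde$. I would then compute $\hom_{D^b \scAtilde}(C_i, C_j)$ for $1 \le i < j \le n + 1$ via the mapping cone formula of Section~\ref{sc:fuk}: the four contributing pieces $\hom(\Delta_i, \Delta_j)$, $\hom(\Delta_{i+n}, \Delta_{j+n})$, $\hom(\Delta_{i+n}, \Delta_j)$, and $\hom(\Delta_i, \Delta_{j+n})$ combine, after taking cohomology of the differential induced by composition with $\id_{i, i+n}$ and $\id_{j, j+n}$, into the $(n+1)$-dimensional exterior power $\wedge^{j-i} V$ with $V = \Vbar \oplus \bC \cdot e_{n+1}$, the extra generator $e_{n+1}$ encoding the direction introduced by the cone construction. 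The induced $\frakm_2$ becomes the full wedge product precisely because the pair $(\id_{i, j}, \id^\vee_{i, j})$ realizes the Koszul-dual pairing at the top and bottom of $\wedge^\bullet \Vbar$.

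The main obstacle will be verifying that the higher $A_\infty$-operations $\frakm_k$ for $k \geq 3$ vanish at the cohomological level. Since operations on $\Fuk W$ are inherited from those on $\scAtilde$ through the twisted-complex formalism, this reduces to formality of the full subcategory of $\Tw \scAtilde$ spanned by the cone objects. I would handle this by a degree-counting argument: a nontrivial $\frakm_k$ of degree $2 - k$ between exterior powers $\wedge^{k_1} V, \ldots, \wedge^{k_m} V$ must land in $\wedge^{k_1 + \cdots + k_m + 2 - k} V$, and for $k \geq 3$ the only admissible targets turn out to be killed by the cone differential. A more conceptual alternative, which also yields the equivariant mirror symmetry statement of Theorem~\ref{th:equiv_hms}, is to identify $\scAtilde$ with the standard full exceptional collection on $D^b \coh^{\bT} \bP^n$, whose endomorphism algebra is well known to be formal.
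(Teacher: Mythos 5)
Your proposal follows essentially the same route as the paper's proof: induction on $n$ using the same Lefschetz bifibration, identification of the fibers of $\varpi_0$ (after rescaling) with fibers of the mirror $\Wbar$ of $\bP^{n-1}$, passage to the universal cover of the $x_n$-plane with its cyclic monodromy, realization of $C_i$ as the cone $\Delta_i \to \Delta_{i+n}$ in $D^b \scAtilde$, the four-term double-complex computation of $\hom(C_i, C_j)$, and vanishing of $\frakm_k$ for $k \neq 2$ by degree reasons (since $\hom(C_i, C_j)$ is concentrated in the single degree $j-i$, which is additive along a composable chain, only $k=2$ can contribute). Only minor slips: the critical locus of $\varpi_0$ is $x_1 = \cdots = x_{n-1}$ with $x_n = x_1^{-n}$ (not $x_1 = \cdots = x_n$), though your critical values $x_n^{n+1} = (-n)^n$ are correct, and the formality is cleaner via the pure-degree count above than via appeal to the cone differential or to the exceptional collection on $D^b \coh^{\bT} \bP^n$.
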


\begin{proof}
Consider the Lefschetz bifibration
\vspace{7mm}
\begin{equation} \label{eq:lefschetz_bifibration3}
\begin{psmatrix}[colsep=1.5]
 (\bCx)^{n} & \bC \times \bCx & \bC
\end{psmatrix}
\psset{shortput=nab,arrows=->,labelsep=3pt}
\small
\ncline{1,1}{1,2}_{\varpi}
\ncline{1,2}{1,3}_{\psi}
\ncarc[arcangle=30]{1,1}{1,3}^{W = \psi \circ \varpi}
\end{equation}
where
$$
 \varpi(x_1, \ldots, x_n)
  = \lb x_1 + \cdots + x_n + \frac{1}{x_1 \cdots x_n}, x_n \rb
$$
and
$$
 \psi(u, v) = u.
$$
The critical points of
$$
 \varpi_t : W^{-1}(t) \to \psi^{-1}(t)
  \cong \Spec \bC[x_n, x_n^{-1}]
$$
are given by
\begin{gather*}
 x_1 = \cdots = x_{n-1}, \qquad n x_1^{n+1} - t x_1^n + 1 = 0
\end{gather*}
with critical values
$$
 x_n = \frac{1}{x_1^n}.
$$
As one varies $t$ along the vanishing path $\gamma_1$
from $t = 0$ to $t = n + 1$,
two points
$
 x = \exp(\pm \pi / (n + 1) \sqrt{-1}) / \sqrt[n+1]{n}
$
from the set of solutions of
\begin{equation} \label{eq:xdisc}
 n x^{n+1} - t x^n + 1 = 0
\end{equation}
at $t = 0$ collide at $x = 1$ and $t = n + 1$,
while the absolute values of other points
remains to be smaller than these two points,
so that their behavior is as shown
in Figure \ref{fg:Pn_x-plane_mp}.
Here and below,
all figures are for $n = 4$,
but the general case is completely parallel.
The corresponding trajectory
of the critical values of $\varpi_t$ is shown
in Figure \ref{fg:Pn_z-plane_mp}.

Now consider the Lefschetz bifibration
\vspace{7mm}
\begin{equation} \label{eq:lefschetz_bifibration4}
\begin{psmatrix}[colsep=1.5]
 W^{-1}(0) & \bCx \times \bCx & \bCx \\
\end{psmatrix}
\psset{shortput=nab,arrows=->,labelsep=3pt}
\small
\ncline{1,1}{1,2}_{\pi}
\ncline{1,2}{1,3}_{\varphi}
\ncarc[arcangle=30]{1,1}{1,3}^{\varpi_0 = \varphi \circ \pi}
\end{equation}
where
$
 \pi(x_1, \ldots, x_n) = (x_{n}, x_{n-1})
$
and
$
 \phi(x_{n}, x_{n-1}) = x_n.
$
Take $x_n = 1$ as a base point and
choose a distinguished set $(\delta_i)_{i=1}^{n+1}$
of vanishing paths for $\varpi_0$
as the straight line segments from the base point
as shown in Figure \ref{fg:Pn_z-plane_vp}.
Consider the pull-back
$$
\begin{CD}
 W^{-1}(0)^{\sim} @>{\varpitilde_0}>> \bC \\
 @VVV @VV{\exp}V \\
 W^{-1}(0) @>{\varpi_0}>> \bCx
\end{CD}
$$
of $\varpi_0$ by the universal cover of the $x_n$-plane.
The $j$-th lift of the vanishing cycle $C_i \subset W^{-1}(0)$
to $W^{-1}(0)^{\sim}$ will be denoted by $C_{i+(n+1)j}$
for $i = 1, \dots, n+1$ and $j \in \bZ$.
We write the Fukaya category of $W^{-1}(0)^{\sim}$
consisting of $\lc C_i \rc_{i \in \bZ}$ as $\Fuk W^{-1}(0)^{\sim}$.

The universal cover of the $x_n$-plane is obtained
by gluing infinitely many copy
of the $x_n$-plane cut along the negative real axis.
We take the point $x_n = 1$ on the zeroth sheet
as the base point $*$ and take a distinguished set
$
 (\delta)_{i \in \bZ}
$
of vanishing paths as in Figure \ref{fg:Pn_z-plane_cover_vp}.
The vanishing cycle along $\delta_i$ will be denoted
by $\Delta_i$.
We write the directed Fukaya category of $\varpitilde_0$
consisting of $( \Delta_i )_{i \in \bZ}$ as $\scAtilde$.
The matching path corresponding to $C_i$ for $i \in \bZ$ is
obtained by concatenating $\delta_i$ and $\delta_{i+n}$
as in Figure \ref{fg:Pn_z-plane_cover_mp}.

\begin{figure}[htbp]
\begin{minipage}{.5 \linewidth}
\centering
\input{Pn_x-plane_mp.pst}
\caption{The behavior of solutions of \eqref{eq:xdisc}}
\label{fg:Pn_x-plane_mp}
\end{minipage}
\begin{minipage}{.5 \linewidth}
\centering
\input{Pn_z-plane_mp.pst}
\caption{Matching paths on the $x_n$-plane}
\label{fg:Pn_z-plane_mp}
\end{minipage}
\begin{minipage}{.5 \linewidth}
\centering
\input{Pn_z-plane_vp.pst}
\caption{Vanishing paths for $\varpi_0$}
\label{fg:Pn_z-plane_vp}
\end{minipage}
\begin{minipage}{.5 \linewidth}
\centering
\input{Pn_yy-plane_vp.pst}
\caption{Vanishing paths for $\Wbar$}
\label{fg:Pn_yy-plane_vp}
\end{minipage}
\begin{minipage}{\linewidth}
\centering
\input{Pn_z-plane_cover_vp.pst}
\caption{Vanishing paths on the universal cover}
\label{fg:Pn_z-plane_cover_vp}
\end{minipage}
\begin{minipage}{\linewidth}
\centering
\input{Pn_z-plane_cover_mp.pst}
\caption{Matching paths on the universal cover}
\label{fg:Pn_z-plane_cover_mp}
\end{minipage}
\end{figure}

Note that the fiber of $\varpi_0$ is isomorphic
to the fiber of
$$
\begin{array}{cccc}
 \Wbar : & (\bCx)^{n-1} & \to & \bC \\
 & \vin & & \vin \\
 & (x_1, \dots, x_{n-1}) & \mapsto &
  x_1 + \cdots + x_{n-1}
  + \displaystyle{\frac{1}{x_1 \cdots x_{n-1}}}
\end{array}
$$
by
$$
\begin{array}{ccc}
 \varpi_0^{-1}(x_n) & \to & \Wbar^{-1} \lb - x_n^{(n+1)/n} \rb \\
 \vin & & \vin \\
 (x_1, \dots, x_n) & \mapsto &
  x_n^{1/n} \lb x_1, \dots, x_{n-1} \rb.
\end{array}
$$
As $x_n$ varies along the vanishing paths
in Figure \ref{fg:Pn_z-plane_vp},
its image by the map $x \mapsto - x^{(n+1)/n}$ behaves
as in Figure \ref{fg:Pn_yy-plane_vp},
which are homotopic to the vanishing paths for $\Wbar$.
The fiber of $\pi_1: \varpi_0^{-1}(1) \to \bCx$
at $x_n = 1$ can be identified with
the fiber of
$
 \Wbar 
$
at $t = -1$,
which in turn can be identified
with the fiber of $\Wbar$ at the origin
by symplectic parallel transport.
Under this identification,
the vanishing paths $\delta_i$
in Figure \ref{fg:Pn_yy-plane_vp}
can be identified with the vanishing paths
$\gammabar_\ibar$
for $\Wbar$,
where $\ibar$ is $i$ modulo $n$.
It follows that
the vanishing cycle $\Delta_i$ along $\delta_i$ corresponds
to the vanishing cycle $\Cbar_\ibar$ along $\gamma_\ibar$.

Assume that the assertion of Theorem \ref{th:Pn_fuk}
holds for $\Wbar$,
so that one has
$$
 \hom_{\Fuk \Wbar}(\Cbar_i, \Cbar_j) =
  \begin{cases}
   \bC \cdot \id_i & i = j \\
   \wedge^{j-i} \Vbar & i < j, \\
   0 & \text{otherwise},
  \end{cases}
$$
where
$$
 \Vbar = \vspan \{ e_1, \dots, e_n \}
$$
is an $n$-dimensional vector space,
an element of $\wedge^k \Vbar$ has degree $k$, and
the $A_\infty$-operation is given by the wedge product.
Then one has
$$
 \hom_{\scAtilde}(\Delta_i, \Delta_j) =
  \begin{cases}
   \bC \cdot \id_i & i = j \\
   \wedge^0 \Vbar \oplus \wedge^n \Vbar & i < j
    \text{ and } j \equiv i \mod n, \\
   \wedge^{\overline{\jmath-\imath}} \Vbar & i < j
    \text{ and } j \not \equiv i \mod n, \\
   0 & \text{otherwise}
  \end{cases}
$$
as a vector space,
where $0 \le \overline{\jmath - \imath} < n$ is a representative
of $[j - i] \in \bZ / n \bZ$.
The gradings of $\Delta_i$ are chosen
so that an element of $\wedge^k V$ has
degree $k$.
The $A_\infty$-operations $\frakm_0$ and $\frakm_1$
vanish, and $\frakm_2$ is given by the wedge product
as
$$
 \frakm_2(\sigma, \tau) = (-1)^{\deg \tau} \sigma \wedge \tau.
$$
We write the elements of $\hom(\Delta_i, \Delta_{i+n})$
corresponding to $1 \in \wedge^0 \Vbar$ and
$e_1 \wedge \cdots \wedge e_n$
as $\id_{i,i+n}$ and $\id_{i,i+n}^\vee$ respectively.

By symplectic Picard-Lefschetz theory
recalled in Section \ref{sc:PL},
there is a cohomologically full and faithful functor
$$
 \Fuk W^{-1}(0)^\sim \to D^b \scAtilde,
$$
which maps the objects as
$$
 C_i
  \mapsto \lc \Delta_i \xto{\id_{i, i+n}} \Delta_{i+n} \rc.
$$
Then one has
\begin{align*}
 \hom(C_i, C_j)
  &= \hom \lb \lc
      \begin{CD}
       \Delta_i @>{\id_{i,i+n}}>> \Delta_{i+n}
      \end{CD}
     \rc,
     \lc
      \begin{CD}
       \Delta_{j} @>{\id_{j,j+n}}>> \Delta_{j+n}
      \end{CD}
     \rc \rb \\
  &= \lc
     \begin{CD}
 \hom(\Delta_{i+n}, \Delta_{j})
  @>{(-1)^{\deg \bullet-1} \frakm_2(\bullet, \, \id_{i,i+n})}>>
 \hom(\Delta_{i}, \Delta_{j}) \\
  @VV{- \frakm_2(\id_{j,j+n}, \, \bullet)}V
  @V{\frakm_2(\id_{j,j+n}, \, \bullet)}VV \\
 \hom(\Delta_{i+n}, \Delta_{j+n})
  @>{(-1)^{\deg \bullet-1} \frakm_2(\bullet, \, \id_{i,i+n})}>>
 \hom(\Delta_{i}, \Delta_{j+n}) 
     \end{CD}
    \rc,
\end{align*}
where the last line denotes the total complex
of the double complex.
If $j < i - 3$, then
every term in the last line of the right hand side is trivial.
If $i - n \le j \le i - 1$,
then the right hand side is given by
\begin{align*}
 \lc
\begin{CD}
 0 @>>> 0 \\
 @VVV @VVV \\
 0 @>>> \wedge^{j-i+n} \Vbar
\end{CD}
 \rc
\end{align*}
which is spanned by
$$
\lb
\begin{CD}
 @. \{ \Delta_i @>>> \Delta_{i+n} \} \\
 @. @V{\tau}VV @. \\
 \{ \Delta_j @>>> \Delta_{j+n} \} @.
\end{CD}
\rb
 \in \hom^1(C_i, C_j)
$$
for $\tau \in \wedge^{j-i+n} \Vbar$.
If $i = j$,
then the complex on the right hand side is given by
\begin{align*}
 \lc
\begin{CD}
 0 @>>> \bC \cdot \id_{\Delta_i} \\
 @VVV @VVV \\
 \bC \cdot \id_{\Delta_{i+n}} @>>>
 \bC \cdot \id_{i,i+n} \oplus \bC \cdot \id_{i,i+n}^\vee
\end{CD}
 \rc
\end{align*}
whose cohomology group is spanned by
$$
\lb \quad
\begin{CD}
 \{ \Delta_i @>>> \Delta_{i+n} \} \\
 @V{\id_{\Delta_i}}VV @V{\id_{\Delta_{i+n}}}VV \\
 \{ \Delta_i @>>> \Delta_{i+n} \}
\end{CD}
\ \rb
 \in \hom^0(C_i, C_j)
$$
and
$$
\lb
\begin{CD}
 @. \{ \Delta_i @>>> \Delta_{i+n} \} \\
 @. @V{\id_{i+n, i}^\vee}VV @. \\
 \{ \Delta_i @>>> \Delta_{i+n} \} @.
\end{CD}
\rb
 \in \hom^1(C_i, C_j).
$$
If $i + 1 \le j \le i + n - 1$,
then the complex on the right hand side is given by
\begin{align*}
 \lc
\begin{CD}
 0 @>>> \wedge^{j-i} \Vbar \\
 @VVV @VVV \\
 \wedge^{j-i} \Vbar @>>>  \wedge^{j-i} \Vbar
\end{CD}
 \rc
\end{align*}
whose cohomology group is spanned by
$$
\begin{CD}
 \{ \Delta_i @>>> \Delta_{i+n} \} \\
 @V{\tau}VV @V{(-1)^{j-i} \tau}VV \\
 \{ \Delta_i @>>> \Delta_{i+n} \}
\end{CD}
$$
for $\tau \in \wedge^{j-i} \Vbar$.
If $j = i + n$, then the complex on the right hand side is given by
\begin{align*}
 \lc
\begin{CD}
 \bC \cdot \id_{\Delta_{i+n}}
  @>>>
 \bC \cdot \id_{i+n, i+2n} \oplus \bC \cdot \id^\vee_{i+n, i+2n} \\
  @VVV @VVV \\
 \bC \cdot \id_{i+n, i+n} \oplus \bC \cdot \id^\vee_{i+n, i+2n} 
  @>>>
 \bC \cdot \id_{i, i+2n} \oplus \bC \cdot \id^\vee_{i, i+2n} 
\end{CD}
 \rc,
\end{align*}
whose cohomology group is spanned by
$$
\begin{CD}
 \{ \Delta_i @>>> \Delta_{i+n} \} \\
 @V{\id^\vee_{i,i+n}}VV @VV{(-1)^{n-1} \id^\vee_{i+n,i+2n}}V \\
 \{ \Delta_{i+n} @>>> \Delta_{i+2n} \}.
\end{CD}
$$
If $j > i + n$, then the complex on the right hand side is acyclic.

If we write
$$
 C_{i, j} = C_{i + (n + 1) j}, \qquad
  i = 1, \dots, n + 1 \text{ and } j \in \bZ,
$$
then the above calculation can be summarized as
$$
 \hom(C_{i, j}, C_{i', j'})
  = (\wedge^{i'-i} V \otimes \rho_{j'-j})^{\bT_n},
 \qquad 1 \le i < i' \le n + 1,
$$
where
$
 \bT_n = \bCx
$
is an algebraic torus,
$$
\begin{array}{cccc}
 \rho_i : & \bT_n & \to & \bCx \\
 & \vin & & \vin \\
 & \alpha & \mapsto & \alpha^i
\end{array}
$$
is an irreducible representation of $\bT_n$,
$$
 V = \rho_0 \oplus \cdots \oplus \rho_0 \oplus \rho_1,
$$
is an $(n + 1)$-dimensional representation of $\bT_n$, and
$\bullet^{\bT_n}$ denotes the subspace of $\bT_n$-invariants.

By descending from $W^{-1}(0)^\sim$ to $W^{-1}(0)$
and taking the directed subcategory,
one obtains
$$
 \hom_{\Fuk W}(C_i, C_j) =
  \begin{cases}
   \bC \cdot \id_{C_i} & i = j, \\
   \wedge^{j-i} V & i > j, \\
   0 & \text{otherwise}.
  \end{cases}
$$
It is straightforward to see that
the $A_\infty$-operation $\frakm_2$ on $\Fuk W$ is
given by wedge product.
One can also show,
either by direct calculation or
for degree reasons,
that $A_\infty$-operations $\frakm_k$ for $k \ne 2$ on $\Fuk W$
vanishes,
and Theorem \ref{th:Pn_fuk} is proved.
\end{proof}

In the proof of Theorem \ref{th:Pn_fuk},
we have thrown away the extra information
obtained by lifting from $W^{-1}(0)$
to its $\bZ$-cover $W^{-1}(0)^\sim$
at each step of the induction.
One can also keep this information,
and the resulting category can be described as follows:

\begin{theorem} \label{th:Pn_equiv_fuk}
Let
$$
 \Wtilde = W \circ \exp : \bC^n \to \bC
$$
be the pull-back of the mirror $W$ of $\bP^n$
by the $\bZ^n$-covering given by the exponential map
$$
 \exp : \bC^n \to (\bCx)^n.
$$
Let $C_{i, \bsj}$ denote the $\bsj$-th lift of $C_i$
for $i = 1, \dots, n + 1$ and $\bsj \in \bZ^n$.
Then one has
$$
 \hom(C_{i, \bsj}, C_{i', \bsj'}) =
\begin{cases}
 \bC \cdot \id_{C_{i, \bsj}} & i = i' \text{ and } \bsj = \bsj', \\
 (\wedge^{i'-i} V \otimes \rho_{\bsj' - \bsj})^\bT & i < i', \\
 0 & \text{otherwise},
\end{cases}
$$
where $V$ is an $(n+1)$-dimensional vector space
with an action of an algebraic torus $\bT = (\bCx)^n$ given by
$$
\begin{array}{cccc}
 \bT \ni (\alpha_1, \dots, \alpha_n) : & \bC^{n+1} & \to & \bC^{n+1} \\
 & \vin & & \vin \\
 & (x_0, x_1, \dots, x_n) & \mapsto
 & (x_0, \alpha_1 x_1, \dots, \alpha_n x_n),
\end{array}
$$
and
$$
\begin{array}{cccc}
 \rho_{\bsj} : & \bT & \to & \bCx \\
 & \vin & & \vin \\
 & (\alpha_1, \dots, \alpha_n) & \mapsto
 & (\alpha_1^{j_1}, \dots, \alpha_n^{j_n})
\end{array}
$$
is a one-dimensional representation of $\bT$
for $\bsj = (j_1, \dots, j_n)$.
\end{theorem}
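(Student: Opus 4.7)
The plan is to repeat the inductive argument used in the proof of Theorem \ref{th:Pn_fuk}, but now carrying along the full $\bZ^n$-grading on morphism spaces coming from the lifts. At each step of the induction one passed from $W^{-1}(0)$ to its universal cover $W^{-1}(0)^\sim$ in a single coordinate direction, and the concluding paragraph of that proof identified the extra $\bZ$-grading with an algebraic torus $\bT_n = \bCx$ via the representations $\rho_i$. Theorem \ref{th:Pn_equiv_fuk} amounts to keeping, rather than discarding, the $\bZ$-gradings obtained at every step, so that they assemble to a single $\bZ^n$-grading identified with the character lattice of $\bT = (\bCx)^n$.

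More concretely, I would proceed by induction on $n$. For the inductive step, I would set up the Lefschetz bifibration \eqref{eq:lefschetz_bifibration3}, pull back $\varpi_0$ along the universal cover $\exp : \bC \to \bCx$ of the $x_n$-plane, and, in parallel, apply the inductive hypothesis to $\Wbar$ on $(\bCx)^{n-1}$ in its fully $(\bCx)^{n-1}$-equivariant form. The identification between $\varpi_0^{-1}(x_n)$ and the fiber of $\Wbar$ over $-x_n^{(n+1)/n}$ used in the proof of Theorem \ref{th:Pn_fuk} is $\bT$-equivariant for the natural torus action, so the morphism spaces between the vanishing cycles $\Delta_i$ of $\varpitilde_0$ decompose exactly as in that proof but with the graded vector space $\Vbar$ replaced by the $(\bCx)^{n-1}$-representation $\rho_0\oplus\rho_{e_1}\oplus\cdots\oplus\rho_{e_{n-1}}$ furnished by the induction hypothesis. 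Applying symplectic Picard-Lefschetz to the cones $C_i = \{\Delta_i \to \Delta_{i+n}\}$ and tracking the lift index $\bsj = (j_1,\dots,j_n)$ then produces the stated formula $\hom(C_{i,\bsj},C_{i',\bsj'}) = (\wedge^{i'-i}V \otimes \rho_{\bsj'-\bsj})^\bT$.

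The book-keeping is the only genuine issue: one must check that the extra $\bZ$-grading from the last covering $\exp : \bC \to \bCx$ of the $x_n$-plane corresponds precisely to the $n$-th factor $\rho_{e_n}$, and that the inductively obtained $\bZ^{n-1}$-grading sits inside the final $\bZ^n$-grading in the way predicted by the decomposition $V = \rho_0 \oplus \rho_{e_1} \oplus \cdots \oplus \rho_{e_n}$. This is the analog, at the equivariant level, of the identification $V = \rho_0^{\oplus n} \oplus \rho_1$ in the last step of the proof of Theorem \ref{th:Pn_fuk}, and it is forced by matching the action of the covering transformation group on the lifted vanishing cycles $\Delta_{i+n}$ versus $\Delta_i$. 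The $A_\infty$-structure and all other computations (the mapping-cone complex, the four cases $j < i-n$, $i-n \le j \le i-1$, $i \le j \le i + n - 1$, $j = i + n$, $j > i + n$) are $\bT$-equivariant refinements of calculations already carried out in the proof of Theorem \ref{th:Pn_fuk} and need not be repeated.

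The main obstacle, which is essentially notational rather than conceptual, is making sure that at each induction step the new $\bZ$-factor is attached to the \emph{correct} coordinate of $\bsj$ so that the compositions of $\bT$-equivariant isomorphisms $\hom(\Delta_i,\Delta_{i+n}) \cong \wedge^0\Vbar \oplus \wedge^n\Vbar \otimes \rho_1$ (from the current step) with the inductively obtained $(\bCx)^{n-1}$-equivariant structures combine into the claimed $\bT = (\bCx)^n$-equivariant structure. Once this is done, descending to $\Wtilde = W\circ \exp : \bC^n \to \bC$, rather than all the way down to $W$, produces precisely the formula in the statement, and the vanishing of $\frakm_k$ for $k \ne 2$ (together with $\frakm_2 =$ wedge) is inherited from Theorem \ref{th:Pn_fuk} by naturality of the covering.
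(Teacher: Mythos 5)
Your proposal is correct and follows exactly the route the paper takes: the paper's entire proof of Theorem \ref{th:Pn_equiv_fuk} is the remark that it is ``completely parallel'' to the proof of Theorem \ref{th:Pn_fuk}, with the extra $\bZ$-grading from each covering step retained rather than discarded, which is precisely the equivariant bookkeeping you describe.
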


The proof of Theorem \ref{th:Pn_equiv_fuk} is
completely parallel to that of Theorem \ref{th:Pn_fuk}.

\section{Tropical coamoeba}
 \label{sc:coamoeba}

We introduce the notion of a tropical coamoeba and
prove Theorem \ref{th:coamoeba} in this section.
A tropical coamoeba is a generalization
of a pair of a dimer model and an internal perfect matching on it
to higher dimensions.
See \cite{Ueda-Yamazaki_NBTMQ,
Ueda-Yamazaki_BTP,
Ueda-Yamazaki_toricdP,
Futaki-Ueda_A-infinity}
and references therein
for dimer models and its application to homological mirror symmetry.

\begin{definition} \label{def:tropical_coamoeba}
A {\em tropical coamoeba}
$
 G = ( (P_i)_{i=1}^m, \deg, \sgn)
$
of a Laurent polynomial
$
 W : (\bCx)^n \to \bC
$
consists of
\begin{itemize}
 \item
a polyhedral decomposition
$$
 T = \bigcup_{i=1}^m P_i,
$$
of a real $n$-torus $T = \bR^n / \bZ^n$
into an ordered set $(P_i)_{i=1}^m$ of polytopes,
 \item
a map
$$
 \deg : F_1 \to \bZ
$$
from the set $F_1$ of facets
to $\bZ$
called the {\em degree}, and
 \item
a map
$$
 \sgn : F_2 \to \{ 1, - 1 \}
$$
from the set $F_2$ of codimension two faces
called the {\em sign},
\end{itemize}
satisfying the following:
\begin{itemize}
 \item
There is a CW complex $Y$ in $W^{-1}(0)$ and
a deformation retraction
\begin{gather*}
 F : W^{-1}(0) \times [0, 1] \to W^{-1}(0), \\
 F(\bullet, 0) = \id_{W^{-1}(0)}, \quad
 \Image F(\bullet, 1) = Y, \quad
 F(\bullet, 1)|_Y = \id_Y,
\end{gather*}
such that the restriction of
$
 F(\bullet, 1)
$
to the union
of a distinguished basis $(C_i)_{i=1}^m$ of vanishing cycles
is a surjection onto $Y$.
 \item
The argument map
$\Arg : (\bCx)^n \to T$
induces a homeomorphism $Y \simto \bigcup_{f \in F_1} f$
into the union of facets.
 \item
The boundary of the polytope $P_i$ is
the image of the vanishing cycle $C_i$;
$$
 \Arg(F(C_i, 1)) = \partial P_i, \qquad i = 1, \dots, m.
$$
 \item
There is a natural one-to-one correspondence
between the set of common facets of $P_i$ and $P_j$
and intersection points of $C_i$ and $C_j$,
and the degree function is given
by the Maslov index of the intersection
with respect to suitable gradings of $W^{-1}(0)$ and
$(C_i)_{i=1}^m$.
 \item
For each codimension two face $e \in F_2$,
one has an $A_\infty$-operation
\begin{equation} \label{eq:A_infty_sgn}
 \m_k(f_1, \dots, f_k) = \sgn(e) f_0
\end{equation}
in the Fukaya category $\Fuk W$,
where $(f_0, f_1, \dots, f_k)$
is the set of facets around $e$,
identified with intersections of vanishing cycles as above.
Moreover, any non-trivial $A_\infty$-operation
in $\Fuk W$ comes from a codimension two face of $P_i$
in this way.
 \item
Let $\Wtilde = W \circ \exp$ be the pull-back of $W$
by the universal covering map $\exp : \bC^n \to (\bCx)^n$.
Then the pull-back $\Gtilde$ of $G$ to the universal cover $\bR^n \to T$
gives a tessellation of $\bR^n$,
which encodes the information of
$\Fuk \Wtilde$
in just the same way as above,
so that polytopes, facets, and codimension two faces
correspond to vanishing cycles of $\Wtilde$,
their intersection points, and $A_\infty$-operations
respectively.
\end{itemize}
\end{definition}

It follows from the definition
that if $G$ is a tropical coamoeba of $W$,
then one can associate a directed $A_\infty$-categories
$\scA_G$ whose set of objects, a basis of the space of morphisms,
and non-trivial $A_\infty$-operations on this basis are given by
the set of polytopes, the set of facets, and
the set of codimension two faces respectively,
which satisfies
$$
 \Fuk W \cong \scA_G.
$$
Moreover,
the directed $A_\infty$-category $\scA_\Gtilde$
associated with the pull-back $\Gtilde$ of $G$
to the universal cover is equivalent
to the Fukaya category associated with $\Wtilde$;
$$
 \Fuk \Wtilde \cong \scA_\Gtilde.
$$

Now we prove Theorem \ref{th:coamoeba}.
We first discuss the case of $\bP^2$
along the lines of \cite{Ueda-Yamazaki_toricdP}.
The mirror of $\bP^2$ is given by
$$
 W(x, y) = x + y + \frac{1}{x y},
$$
which has three critical values
$3$, $3 \omega$ and $3 \omega^2$.
Choose a distinguished set $(c_i)_{i=1}^3$ of vanishing paths
as the straight line segments
from the origin to each critical values
as in Figure \ref{fg:Z3_vanishing_paths}.
The $y$-projection
$$
\begin{array}{cccc}
 \varpi_t : & W^{-1}(t) & \to & \bCx \\
 & \rotatebox{90}{$\in$} & & \rotatebox{90}{$\in$} \\
 & (x, y) & \mapsto & y
\end{array}
$$
has three branch points,
which moves as shown in Figure \ref{fg:Z3_sheet_vctwo}
along the vanishing paths.
The trajectories of these branch points
are images of vanishing cycles
by $\varpi = \varpi_0$.
There are six disks in $W^{-1}(0)$
bounded by these vanishing cycles,
which are projected onto three triangles
in Figure \ref{fg:Z3_sheet_vctwo}.
By contracting these six disks,
one obtains a graph on $W^{-1}(0)$
whose $\pi$ projection is shown
in Figure \ref{fg:Z3_sheet_contractionone}.
Figure \ref{fg:Z3_coamoeba_contraction}
shows a schematic picture
of the image of this graph
by the argument map.
Here, the color scheme in Figures \ref{fg:Z3_sheet_contractionone}
and \ref{fg:Z3_coamoeba_contraction}
is not a continuation of the scheme introduced
in Figures \ref{fg:Z3_vanishing_paths} and \ref{fg:Z3_sheet_vctwo}.
The horizontal and the vertical axes
in Figure \ref{fg:Z3_coamoeba_contraction}
correspond to $\arg y$ and $\arg x$ respectively.
The inverse image of the circle on the $y$-plane
in Figure \ref{fg:Z3_sheet_contractionone} by $\varpi_0$
is a non-trivial double cover of it,
which maps to a cycle in the class
$(2, -1) \in H_1(T, \bZ) \cong \bZ^2$
shown in black in Figure \ref{fg:Z3_coamoeba_contraction}.
Three legs in Figure \ref{fg:Z3_sheet_contractionone}
connect two branches of the double cover $\varpi_0$,
which map to vertical line segments
in Figure \ref{fg:Z3_coamoeba_contraction}.
As a result,
one obtains the division of $T$ into three hexagons
as shown in Figure \ref{fg:Z3_graph}.
It is easy to see that
the set of edges in Figure \ref{fg:Z3_graph}
corresponds to the set of intersection points of vanishing cycles,
and the set of nodes
corresponds to holomorphic disks bounded by vanishing cycles.
The colors of the nodes correspond
to the signs of the $A_\infty$-operations.

\begin{figure}[htbp]
\begin{tabular}[b]{cc}
\begin{minipage}{.45 \linewidth}
\centering
\input{Z3_vanishing_paths.pst}
\caption{A distinguished set of vanishing paths}
\label{fg:Z3_vanishing_paths}
\end{minipage}
&
\begin{minipage}{.45 \linewidth}
\centering
\input{Z3_sheet_vc2.pst}
\caption{The trajectories of the branch points}
\label{fg:Z3_sheet_vctwo}
\end{minipage}
\\
\ \vspace{0mm}\\
\begin{minipage}{.45 \linewidth}
\centering
\input{Z3_sheet_contraction1.pst}
\caption{Contracting $W^{-1}(0)$}
\label{fg:Z3_sheet_contractionone}
\end{minipage}
&
\begin{minipage}{.45 \linewidth}
\centering
\input{Z3_coamoeba_contraction.pst}
\caption{Image of the contraction by the argument map}
\label{fg:Z3_coamoeba_contraction}
\end{minipage}
\\
\ \vspace{0mm}\\
\multicolumn{2}{c}
{
\begin{minipage}{.45 \linewidth}
\centering
\input{Z3_graph.pst}
\caption{The honeycomb tiling}
\label{fg:Z3_graph}
\end{minipage}
}
\end{tabular}
\end{figure}

\begin{figure}
\begin{minipage}{.5 \linewidth}
\centering
\input{P3_sheet_contraction.pst}
\caption{Contraction on the $z$-plane}
\label{fg:P3_sheet_contraction}
\end{minipage}
\begin{minipage}{.5 \linewidth}
\centering
\input{P3_hexagon_motion.pst}
\caption{The monodromy around the origin}
\label{fg:P3_hexagon_motion}
\end{minipage}
\end{figure}
\begin{figure}[htbp]
\centering
\input{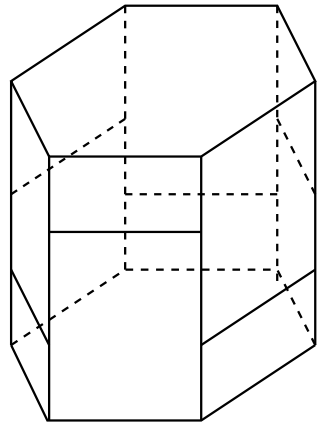}
\caption{A truncated octahedron}
\label{fg:P3_polytope}
\end{figure}
\begin{figure}[htbp]
\centering
\input{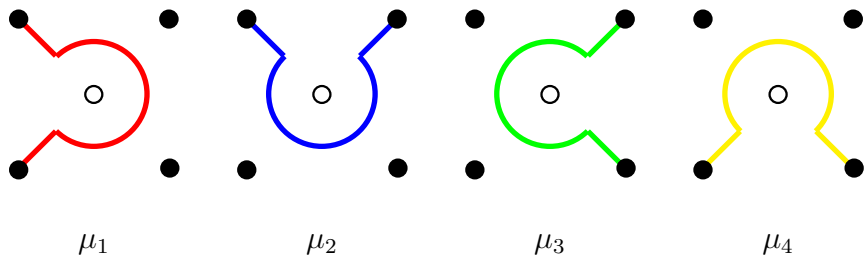}
\caption{Contractions of the matching paths}
\label{fg:P3_vc_contraction}
\end{figure}
\begin{figure}[htbp]
\centering
\input{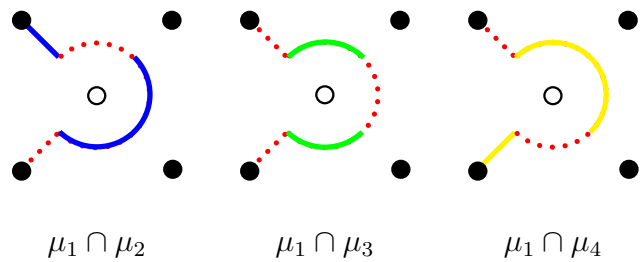}
\caption{Intersections of contracted matching paths}
\label{fg:P3_vc_contraction_int}
\end{figure}
\begin{figure}[htbp]
\begin{tabular}[b]{cc}
\begin{minipage}{.5 \linewidth}
\centering
\input{P3_polytope_int1.pst}
\caption{The facets of $P_1$ adjacent to $P_2$}
\label{fg:P3_polytope_int1}
\end{minipage}
&
\begin{minipage}{.5 \linewidth}
\centering
\input{P3_polytope_int2.pst}
\caption{The facets of $P_1$ adjacent to $P_3$}
\label{fg:P3_polytope_int2}
\end{minipage}
\\
\ \vspace{0mm}\\
\begin{minipage}{.5 \linewidth}
\centering
\input{P3_polytope_int3.pst}
\caption{The facets of $P_1$ adjacent to $P_4$}
\label{fg:P3_polytope_int3}
\end{minipage}
&
\begin{minipage}{.5 \linewidth}
\centering
\input{P3_polytope_int4.pst}
\caption{The edges of $P_1$ adjacent to $P_2$ and $P_3$}
\label{fg:P3_polytope_int4}
\end{minipage}
\end{tabular}
\end{figure}

Now we discuss the case of $\bP^3$.
By contracting the matching paths
in Figure \ref{fg:P3_z-plane_mp},
one obtains a circle with four legs
shown in Figure \ref{fg:P3_sheet_contraction}.
The fiber of $\varpi_0$ over a point on this circle is
symplectomorphic to $\Wbar^{-1}(0)$,
which can be contracted to the honeycomb graph
in Figure \ref{fg:Z3_graph} as explained above.
As one goes around the circle,
this honeycomb graph undergoes a monodromy
$$
 D_1 \mapsto D_2 \mapsto D_3 \mapsto D_1
$$
of order three,
where $D_i$ is the face in the honeycomb graph
corresponding to the $i$-th vanishing cycle of $\varpi_0$.
The image by the argument map
of this honeycomb graph bundle over the circle on the $z$-plane
divides the 3-torus $T$ into an obliquely-embedded hexagonal cylinder.
Four legs in Figure \ref{fg:P3_sheet_contraction} give
four faces perpendicular to the $\arg z$-axis,
which cut this hexagonal cylinder
into four truncated octahedra
$(P_i)_{i=1}^4$.

A {\em truncated octahedron} is the polytope
with fourteen faces, thirty-six edges and twenty-four vertices,
which is obtained from an octahedron
by truncating at its six vertices.
One of the four truncated octahedra in $T$ is shown
in Figure \ref{fg:P3_polytope},
where we have chosen to draw $\arg x$ and $\arg y$ horizontally,
and $\arg z$ vertically.
By pulling back this division of $T$ into four truncated octahedra
to the universal cover $\bR^3 \to T$,
one obtains the {\em bitruncated cubic honeycomb},
which is the Voronoi tessellation
for the body-centered cubic lattice.

It is straightforward to see that
intersections of vanishing cycles and
$A_\infty$-operations in Fukaya category correspond
to faces and edges of truncated octahedra respectively,
so that the decomposition of $T$ into four
truncated octahedra, together with a suitable choice
of the functions $\mu$ and $\sgn$,
gives a tropical coamoeba of $W$:
Matching paths are contracted
as in Figure \ref{fg:P3_vc_contraction},
and Figure \ref{fg:P3_vc_contraction_int} shows
the intersections of the matching path $\mu_1$ for $C_1$
with three other matching paths.
These intersections correspond
to faces of $P_1$ shown in Figures \ref{fg:P3_polytope_int1},
\ref{fg:P3_polytope_int2}, and \ref{fg:P3_polytope_int3},
which can be seen to be in natural bijection
with intersection points of $C_1$ with $C_2$, $C_3$ and $C_4$
by comparing with the discussion in Section \ref{sc:p3}.
It is also straightforward to see
that the edges of $P_i$ corresponds to
$A_\infty$-operations in $\Fuk \Wtilde$;
for example,
twelve edges corresponding to
$$
 \frakm_2 : \hom^1(C_2, C_3) \otimes \hom^1(C_1, C_2)
  \to \hom^2(C_1, C_3)
$$
are shown in Figure \ref{fg:P3_polytope_int4}.

Now we discuss the general case.
The {\em permutohedron} of order $n + 1$ is an $n$-dimensional polytope
lying on the hyperplane
\begin{equation*} 
 H = \lc (x_1, \dots, x_{n+1}) \in \bR^{n+1} \left|
  x_1 + \dots + x_{n+1} = \frac{n (n + 1)}{2} \right. \rc,
\end{equation*}
defined as the convex hull of the orbit of
$(1, 2, \dots, n+1) \in \bR^{n+1}$
under the action of the symmetric group $\frakS_{n+1}$
by permutations of coordinates.
Note that
the permutohedron of order three is a hexagon, and
the permutohedron of order four is a truncated octahedron.
A facet of a permutohedron of order $n$ corresponds
to a division
$$
 B_1 \sqcup B_2 = \{ 1, 2, \dots, n + 1 \}
$$
of the set $\{ 1, 2, \dots, n + 1 \}$
into the disjoint union of two subsets,
and a codimension two face corresponds to a division
$$
 B_1 \sqcup B_2 \sqcup B_3= \{ 1, 2, \dots, n + 1 \}
$$
into the disjoint union of three subsets.
The facet corresponding to the division $B_1 \sqcup B_2$
is given by
$$
 \sum_{i \in B_1} x_i = 1 + 2 + \cdots + \# B_1,
$$
and the codimension two face corresponding to the division
$B_1 \sqcup B_2 \sqcup B_3$ is given by
\begin{align*}
 \sum_{i \in B_1} x_i &= 1 + 2 + \cdots + \# B_1, \\
 \sum_{i \in B_1 \sqcup B_2} x_i
  &= 1 + 2 + \cdots + \# (B_1 \sqcup B_2), 
\end{align*}
so that the inclusion of a face into a facet corresponds to
a subdivision of a division of length two
into a division of length three.
The translations of the permutohedron of order $n + 1$
by the lattice of rank $n$
generated by
$$
 \ell_i = (n+1) e_i - (e_1 + \cdots + e_{n+1}),
  \qquad i = 1, \dots, n+1,
$$
where $e_i$ is the $i$-th coordinate vector,
tessellates the hyperplane H.
The polytope adjacent to the permutohedron
through the facet corresponding to the division $B_1 \sqcup B_2$ is
the translate of the permutohedron by
$$
 \sum_{i \in B_2} \ell_i.
$$
Every codimension two face of this tessellation is adjacent
to three facets, corresponding to
$B_1 \sqcup B_2$, $B_1' \sqcup B_2'$ and $B_1'' \sqcup B_2''$
such that $B_2'' = B_2 \sqcup B_2'$.

The set of facets of the permutohedron of order $n + 1$
maps bijectively to a basis of
$
 \wedge^\bullet V / (\wedge^0 V \oplus \wedge^{n+1} V)
$
by
$$
 B_1 \sqcup B_2 \mapsto \wedge_{i \in B_2} e_i.
$$
Under this correspondence,
the translates of three facets share a codimension two face
if and only if they correspond to
$u$, $v$ and $w$ in
$
 \wedge^\bullet V / (\wedge^0 V \oplus \wedge^{n+1} V)
$
such that $w = \pm u \wedge v$.

Now we inductively show
that the quotient of the above tessellation
by the lattice $\Lambda \cong \bZ^n$ generated by
$$
 \ell_i + (\ell_1 + \cdots + \ell_{n}), \qquad i = 1, \dots, n
$$
is a tropical coamoeba for the mirror of $\bP^{n}$.
By contracting the union of the matching paths
for $\varpi_0$ on the $x_{n}$-plane,
one obtains a circle $S$ with $n+1$ legs
$\{ l_1, \dots, l_{n+1} \}$,
numbered clockwise.
The fiber over a point on $S$ can be contracted
to the union of $n$ permutohedra
$\{ \Pbar_i \}_{i=1}^n$ of order $n$
by induction hypothesis,
which undergoes the cyclic monodromy
$$
 \Pbar_i \mapsto \Pbar_{i+1}, \qquad i = 1, \dots, n
$$
as one goes around the circle.
Its image by the argument map
gives a division of $T^n$ into an oblique cylinder
over $\Pbar_1$, which is divided into $n + 1$ permutohedra
$\{ P_i \}_{i=1}^{n+1}$ of order $n + 1$
by the $n + 1$ facets
coming from $n+1$ legs:
Let us call the direction of $\arg x_n$ vertical
and other directions horizontal.
The $x_n$-projection of the contracted vanishing cycle
consists of two legs $l_i$, $l_{i+n}$
and the part of the circumference between them.
The horizontal facets corresponding to $l_i$ and $l_{i+n}$
corresponds to $e_{n+1}$ and
$e_1 \wedge \dots \wedge e_n$ respectively.
There are $2^n - 2$ vertical facets of the cylinder,
and the one corresponding to
$$
 e_{i_1} \wedge \cdots \wedge e_{i_r}
$$
is divided into two,
one corresponding to
$$
 e_{i_1} \wedge \cdots \wedge e_{i_r}
$$
and the other corresponding to
$$
 e_{i_1} \wedge \cdots \wedge e_{i_r} \wedge e_{n+1}.
$$
As a whole, one obtains $2^{n+1} - 2$ facets,
and $P_1$ can be identified with
the permutohedron of order $n+1$.
Under this identification,
$P_i$ can be identified with the translation of $P_1$ by $\ell_{n+1}$,
and the union $\bigcup_{i=1}^{n+1} P_i$ is a fundamental region
of the lattice $\Lambda$.
The degree function takes the value $|B_2|$
on the facet corresponding to the division
$B_1 \sqcup B_2$,
and the value $\sgn(B_2', B_2)$ of the sign function
on the codimension two face $f$ of $P_i$,
where the facet of $P_i$
corresponding to $B_1 \sqcup B_2$ intersects $P_j$
and the facet of $P_j$ corresponding
corresponding to the division $B_1' \sqcup B_2'$ intersects $P_k$
for $i < j < k$,
is given by
$$
 \wedge_{i \in B_2 \sqcup B_2'} e_i
  = \sgn(B_2', B_2) \cdot (-1)^{|B_2|}
     (\wedge_{i \in B_2'} e_i) \wedge (\wedge_{i \in B_2} e_i).
$$

The $A_\infty$-category $\scA_G$
associated with the tropical coamoeba
$$
 G = ((P_i)_{i=1}^{n+1}, \deg, \sgn)
$$
defined above is quasi-equivalent
to the full subcategory of
a standard differential graded enhancement of $D^b \coh \bP^{n}$
consisting of
$$
 (E_1, E_2, \dots, E_{n+1})
  = (\Omega_{\bP^{n}}^{n}(n)[n],
      \Omega_{\bP^{n}}^{n-1}(n-1)[n-1], \dots,
      \scO_{\bP^{n}}).
$$
This implies the equivalence
$$
 D^b \scA_G \cong D^b \coh \bP^{n}
$$
of triangulated categories,
since $(E_1, \dots, E_{n+1})$ is a full exceptional collection
by Beilinson \cite{Beilinson}.
It is clear that
this equivalence lifts to the equivalence
$$
 D^b \scA_\Gtilde \cong D^b \coh^\bT \bP^{n}
$$
by sending the object of $\scA_\Gtilde$
corresponding to the $\bsj$-th lift of $P_i$
for $\bsj \in \Lambda \cong \bZ^{n}$
to $E_i \otimes \rho_{\bsj}$,
and Theorem \ref{th:coamoeba} is proved.
Theorem \ref{th:equiv_hms} is an immediate consequence of
Theorem \ref{th:coamoeba},
which in turn implies Corollary \ref{cor:hms}
just as in the two-dimensional case \cite{Ueda-Yamazaki_toricdP}.

\bibliographystyle{amsalpha}
\bibliography{bibs.bib}

\noindent
Masahiro Futaki

Department of Mathematics,
Graduate School of Science,
Kyoto University,
Kyoto 606-8502,
Japan

{\em e-mail address}\ : \  futaki@math.kyoto-u.ac.jp

\ \\

\noindent
Kazushi Ueda

Department of Mathematics,
Graduate School of Science,
Osaka University,
Machikaneyama 1-1,
Toyonaka,
Osaka,
560-0043,
Japan.

{\em e-mail address}\ : \  kazushi@math.sci.osaka-u.ac.jp
\ \vspace{0mm} \\

\end{document}